\documentclass[preprint]{imsart}

\RequirePackage[OT1]{fontenc}
\RequirePackage{pdfsync}
\RequirePackage{amsfonts}
\RequirePackage[numbers]{natbib}
\RequirePackage[colorlinks,citecolor=blue,urlcolor=blue]{hyperref}
\RequirePackage[centertags, leqno, sumlimits, nointlimits, namelimits]{mathtools}
\RequirePackage[thmmarks,thref,amsthm,amsmath]{ntheorem}
\RequirePackage{paralist}
%\RequireP_0ackage{enumitem}
\RequirePackage[english, noabbrev]{cleveref}
% settings
%\pubyear{2005}
%\volume{0} 
%\issue{0}
%\firstpage{1}
%\lastpage{8}
%\arxiv{arXiv:0000.0000}

\startlocaldefs
\numberwithin{equation}{section}
\theoremstyle{plain}
\newtheorem{theorem}{Theorem}[section]
\newtheorem{lemma}[theorem]{Lemma}

\theoremstyle{remark}
\newtheorem{remark}[theorem]{Remark}

\endlocaldefs

%%%%%%%%% New commands
\newcommand{\N}{\mathbb{N}} %Natural numbers
\newcommand{\R}{\mathbb{R}} %Real numbers
 %Expectation
\newcommand{\lebesgue}{\ensuremath{\lambda\!\!\!\;\!\lambda}} % Lebesgue measure
\DeclareMathOperator*{\essup}{ess\,sup} %essiantial supremum
\begin{document}

\begin{frontmatter}
\title{Signal detection via Phi-divergences for general mixtures}
\runtitle{Signal detection via Phi-divergences for general mixtures}

\begin{aug}
\author{\fnms{Marc} \snm{Ditzhaus}\ead[label=e1]{marc.ditzhaus@uni-duesseldorf.de}}

%\author{\frac{\mathrm{ d } \mu_{n}}{\,\mathrm{ d }P_{n}}ms{Third} \snm{Author}\thanksref{t1,m2}
%\ead[label=e3]{third@somewhere.com}
%\ead[label=u1,url]{http://www.foo.com}}

%\thankstext{t2}{Supported by DFG Grant no. 618886}
\runauthor{M. Ditzhaus}

\affiliation{Heinrich-Heine University D\"usseldorf \thanksmark{m1}}

\address{Address of the author\\
Heinrich-Heine University D\"usseldorf\\
Mathematical Institute\\
Universit\"atststra{\ss}e 1\\
40225 D\"usseldorf, Germany\\
\printead{e1}}

%\address{Address of the Third author\\
%Usually a few lines long\\
%Usually a few lines long\\
%\printead{e3}\\
%\printead{u1}}
\end{aug}

\begin{abstract}
	\textbf{Summary:} In this paper we are interested in testing whether there are any signals hidden in high dimensional noise data. Therefore we study the family of goodness-of-fit tests based on $\Phi$-divergences including  the test of Berk and Jones as well as Tukey's higher criticism test. The optimality of this family is already known for the heterogeneous normal mixture model. We now present a technique to transfer this optimality to more general models. For illustration we apply our results to dense signal and sparse signal models including the exponential-$\chi^2$ mixture model and general exponential families as the normal, exponential and Gumbel distribution. Beside the optimality of the whole family we discuss the power behavior on the detection boundary and show that the whole family has no power there, whereas the likelihood ratio test does.
\end{abstract}

\begin{keyword}[class=MSC]
	\kwd{62G10}
	\kwd{62G20} 
\end{keyword}

\begin{keyword}
\kwd{Berk and Jones test}
\kwd{detection boundary}
\kwd{$\Phi$-divergences}
\kwd{nonparametric statistic}
\kwd{Tukey's higher criticism}
\end{keyword}
 
\end{frontmatter}

\section{Introduction}\label{sec:intro}
In several research areas it is of interest to detect rare and weak signals hidden in a huge noisy background. These areas are, among others, genomics  \cite{DaiETAL2012,Goldstein2009,IyengarElston2007}, disease surveillance \cite{KulldorffETAL2005,NeillLingwall2007}, local anomaly detection \cite{SaligramaZhao2012} as well as cosmology and astronomy  \cite{CayonETAL2004,JinETAL2005Cosmo}. E.g., in genomics we want to determine as early as possible whether a patient is healthy or affected by a common disease like cancer or leukemia. Many researchers assume that the majority of an affected patients' genes behaves like genes of a non-affected patient (noisy background) and only a small amount of genes displays a slightly different behavior (signals). In other words, if there are any signals at all then they are represented rarely and weakly. This combination makes it very difficult to detect the signals. In this paper we study tests for this detection problem. After introducing the mathematical model we give more details about tests which were already suggested in the literature and explain our new insights into these.   \\
Let $P_n$ be a known continuous noise distribution and $\mu_n$ be an unknown signal distribution on $(\R,{\mathcal B })$. E.g., $\mu_n((-\infty,x])=P_n((-\infty,x-\vartheta_n])$ for $\vartheta_n\in\R$, i.e. a signal leads to a shift by $\vartheta_n$. Now, let $X_{n,1},\ldots,X_{n,n}$ be an i.i.d. sample with     
\begin{align*}
	X_{n,1}\sim Q_n=(1-\varepsilon_{n})P_n+\varepsilon_{n}\mu_n \text{ fo some }\varepsilon_n\in[0,1].	
\end{align*}
The parameter $\varepsilon_{n}$ can be interpreted as the probability that $X_{n,1}$ follows the signal distribution $\mu_n$ instead of the noise distribution $P_n$. Hence, the number of signals is random and approximately the size of $n\varepsilon_{n}$. In this paper we are interested to test whether there are any signals, i.e. to test
\begin{align}\label{eqn:testing_problem}
	{\mathcal H }_{0,n}: \varepsilon_{n}=0\; \text{ versus }\; {\mathcal H }_{1,n}: \varepsilon_{n}>0.
\end{align}
We focus on the challenging case of rare signals $\varepsilon_{n}\to 0$, where we differ between the sparse signal case ($n\varepsilon_n\to 0$) and the dense signal case ($n\varepsilon_n\to \infty$). Throughout this paper, if not stated otherwise, all limits are meant as $n\to\infty$. Clearly, the likelihood ratio test, also called Neyman-Pearson test, is the best test for  \eqref{eqn:testing_problem}. Its power behavior was studied by \citet{Ingster1997} for a normal location model, i.e. $P_n=N(0,1)$ and $\mu_n=N(\vartheta_n,1)$. This model is also called the heterogeneous normal mixture model. Using the parametrization  $\varepsilon_n=n^{-\beta}$, $\beta\in(1/2,1)$, and $\vartheta_n=\sqrt{2r\log(n)}$, $r>0$, he showed that there is a detection boundary $\rho(\beta)$, which splits the $r$-$\beta$-parametrization plane into the completely detectable and the undetectable area, see \Cref{fig:normal}:
\begin{align}\label{eqn:detect_boundary_standnorm}
	\rho(\beta)=\begin{cases}
				\beta-\frac{1}{2} & \textrm{ if }\beta\in(\frac{1}{2},\frac{3}{4}],\\
				(1-\sqrt{1-\beta})^2& \textrm{ if }\beta\in(\frac{3}{4},1).
			\end{cases}
\end{align} 
\begin{figure}[tb] % Mit dem Zusatz [htbp], kann bestimmt werden, dass die Grafik genau "hier" eingefügt wird. Am Seitenanfang finde ich aber wesentlich angenehmer!
	\begin{center}	
		\includegraphics[trim = 22mm 150mm 80mm 10mm, clip, width=0.45\textwidth]{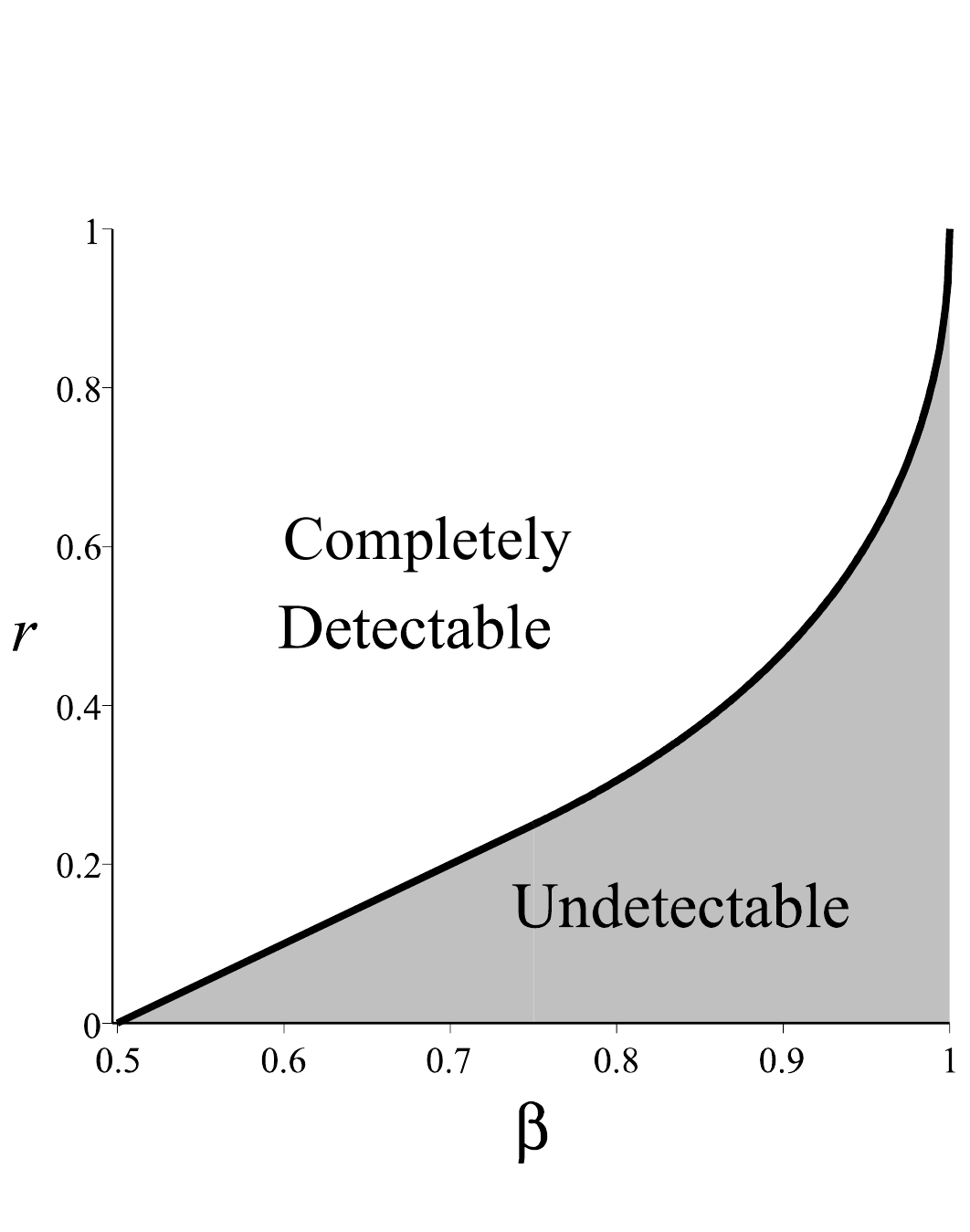}
	\end{center}
	\caption[Detection boundary for the sparse heterogeneous normal mixture model]{
		The detection boundary for the sparse heterogeneous normal mixture model $\beta\mapsto \rho(\beta)$, see \eqref{eqn:detect_boundary_standnorm}, is plotted. It splits the $r$-$\beta$-parametrization plane into the completely detectable and the undetectable area, where the null $\mathcal H_{0,n}$ and the alternative  $\mathcal H_{1,n}$ can be completely separated and merge (asymptotically), respectively. } \label{fig:normal}
\end{figure}\\
If $r>\rho(\beta)$ then the likelihood ratio test can completely separate ${\mathcal H }_{0,n}$ and ${\mathcal H }_{1,n}$ asymptotically (completely detectable case), i.e. the sum of type 1 and 2 error probabilities tends to $0$. Otherwise, if $r<\rho(\beta)$, the likelihood ratio test and, thus, any other test cannot distinguish between ${\mathcal H }_{0,n}$ and ${\mathcal H }_{1,n}$ asymptotically (undetectable case). Later, \citet{DonohoJin2004} showed that Tukey's higher criticism \cite{TukeyCoursenotes,TukeyInternalPaper,TukeyCollected} can also completely separate ${\mathcal H }_{0,n}$ and ${\mathcal H }_{1,n}$ asymptotically if $r>\rho(\beta)$. This showed a certain optimality of the higher criticism test. In contrast to the likelihood ratio test the higher criticism test does not need the knowledge of the typically unknown signal probability $\varepsilon_n$ and signal strength $\vartheta_n$. \citet{JagerWellner2007} introduced a family of test statistics $\{S_n(s):s\in[-1,2]\}$ based on $\Phi$-divergences including  higher criticism test statistic and the test statistic of \citet{BerkJones1979}. They extended the optimality result of \citet{DonohoJin2004} to their whole family. But in contrast to the higher criticism test, see \cite{AriasCandesPlan2015,AriasWang2015,AriasWang2017,CaiJengJin2011,Cai_Wu_2014,DonohoJin2004,IngsterETAL2010,MukherjeePillaiLin2015}, it is less known if this optimality also holds under more general model assumptions for the whole family. In this paper we want to give a positive answer to this uncertainty. \\
As already mentioned, we differ between dense and sparse signals, where the main focus in the literature lies on the latter one. There are only a few positive results about the higher criticism test for dense signals. E.g., \citet{CaiJengJin2011} proved the optimality of the higher criticism test in the dense signal case for the normal location model introduced above with $\vartheta_n\to 0$. We will extend these results to general exponential families and to the whole family of \citet{JagerWellner2007}.\\
When we explained the results of \citet{Ingster1997} we omitted the case $r=\rho(\beta)$, the behavior on the detection boundary. \citet{Ingster1997} determined the limit distribution of the likelihood ratio test on the boundary under $\mathcal H_{0,n}$ as well as under $\mathcal H_{1,n}$. An interesting observation is that non-Gaussian limits do also occur. In other words, he showed that there is a third area in the $r$-$\beta$-parametrization plane, namely the nontrivial power area on the boundary. \citet{DitzhausJanssen2017} studied the asymptotic behavior of the likelihood ratio test and the higher criticism test on the detection boundary for general mixtures. In particular, they showed that the higher criticism test has no power on the boundary for various models, whereas the likelihood ratio test has nontrivial power there. We will extend this negative result to the whole family of \citet{JagerWellner2007}. \\
In short, this paper gives the following new insights into the tests based on $\Phi$-divergences introduced by \citet{Cai_Wu_2014}:
\begin{enumerate}[(i)]
	\item In contrast to \citet{JagerWellner2007} we do not restrict the parameter $s$ to the interval $[-1,2]$ and consider the family of test statistics $\{S_n(s):s\in\R\}$ instead. In particular, we extend the result of \citet{JagerWellner2007} about the asymptotic behavior of $S_n(s)$ under the null to the parameters $s\notin[-1,2]$.  
	
	\item The optimality of tests based on $S_n(s)$ for some $s\in\R$ even holds beyond the assumption of normality. In particular, we verify the optimality for the model recently suggested by \citet{Cai_Wu_2014} and for general dense mixtures based on exponential families. 
	
	\item On the detection boundary tests of the form $\varphi_n=\mathbf{1}\{S_n(s)>c_n(s)\}$ for some $s\in\R$ have no power asymptotically, whereas the likelihood ratio test does.
\end{enumerate} 
The paper is organized as follows. In \Cref{sec:teststat} we introduce the family of test statistics $\{S_n(s):s\in\R\}$ and present the limit distribution under the null $\mathcal H_{0,n}$, which is the same for the whole family. \Cref{sec:alt} contains our tools to discuss the asymptotic power of the whole family under the alternative $\mathcal H_{1,n}$. These tools are applied in \Cref{sec:appl} to a sparse signal model class recently suggested by \citet{Cai_Wu_2014} and a dense signal model based on general exponential families.
\section{The test family and its limit under the null}\label{sec:teststat}

\subsection{The test statistics}
This papers' focus lies on continuous noise distributions. That is why we can assume without loss of generality that $P_n=P_0$ for all $n$, e.g., having a transformation to $p$-values in mind. Denote the distribution function of $P_0$ by $F_0$. The basic idea is to compare the empirical distribution function $\mathbb{F}_n(u)=n^{-1}\sum_{i=1}^{n}\mathbf{1}\{X_{n,i}\leq u\}$ with the noise distribution function $F_0(u)$ for $u\in(0,1)$ by using one of the $\Phi$-divergences tests proposed by \citet{Csiszar1967} based on a convex function $\Phi$, see also \cite{AliSilvey1966,CressieRead1984,Csiszar1967}. To be more specific, we introduce a family $(\phi_s)_{s\in\R}$ of convex functions mapping $[0,\infty)$ to $\R\cup\{\infty\}$:
\begin{align*}
	\phi_s(x) = \begin{cases}
					x-\log (x)-1 &\text{ for }s= 0, \\
					x(\log (x) - 1)+1 &\text{ for }s= 1, \\
					 (1-s+sx-x^s)/(s(1-s)) &\text{ for }s\neq 0,1 .
				  \end{cases}
\end{align*}
 Based on these the family of $\Phi$-divergences statistics $(K_s)_{s\in\R}$ is given by
\begin{align*}
	K_s(u,v)= v \phi_s\Bigl(\frac{u}{v}\Bigr) + (1-v) \phi_s\Bigl(\frac{1-u}{1-v}\Bigr)
\end{align*}
for $u,v\in(0,1)$. It is easy to see that $\R\ni s\mapsto \phi_s(x)$ is continuous for every fixed $x\in(0,\infty)$ and so is $\R\ni s\mapsto K_s(u,v)$ for all fixed $u,v\in(0,1)$. Now, we consider the following family $\{S_n(s):s\in\R\}$ of test statistics for \eqref{eqn:testing_problem} given by
\begin{align}\label{eqn:test_statistic}
	S_n(s) = \sup_{X_{1:n}\leq x< X_{n:n}} K_s(\mathbb{F}_n(x),F_0(x)),
\end{align}
where $X_{1:n}\leq X_{2:n}\leq \ldots\leq X_{n:n}$ denote the order statistics of the observation vector $(X_{n,1},\ldots,X_{n,n})$. As explained by \citet{JagerWellner2007}  Tukey's higher criticism test $(s=2)$, the test of \citet{BerkJones1979} $(s=1)$, the "reversed Berk-Jones" statistic introduced by \citet{JagerWellner2004} $(s=0)$ and a studentized version of the higher criticism statistic studied by \citet{Eicker1979} $(s=-1)$ are included in this family. Note that $S_n(s)$ does not always coincide with the corresponding known test statistic but is equivalent to them for $s$ given in the parenthesis. For all other $s$ the test statistic $S_n(s)$ was new. \citet{JagerWellner2007} give a special emphasis to  $S_n(1/2)$, which is equivalent to the supremum of the pointwise Hellinger divergences between two Bernoulli distributions with parameters $F_0(u)$ and $\mathbb{F}_n(u)$, $u\in(0,1)$. 

\subsection{Limit distribution under the null}
The limit distribution of the higher criticism statistic is already known, see \citet{Jaeschke1979} and also Section 16.1 of \citet{ShorackWellner1986}, and so one can derive the limit distribution of $S_n(2)$. \citet{JagerWellner2007} showed that $nS_n(s)-nS_n(2)$ converges in probability to $0$ under the null $\mathcal H_{0,n}$ and, consequently, the limit distribution is the same for all $s\in[-1,2]$. We now extend their result to all $s\in\R$. \\
\textbf{Notation for convergences:} By $\overset{\mathrm d}{\longrightarrow},\overset{\mathrm P_0^n}{\longrightarrow},\overset{\mathrm Q_n^n}{\longrightarrow}$ we denote convergence in distribution, in $P_0^n$-probability and in $Q_n^n$-probability, respectively. 

\begin{theorem}\label{theo:null}
	Define
	\begin{align*}
		r_n= \log\log(n) + \frac{1}{2} \log\log\log(n) - \frac{1}{2} \log(4\pi).
	\end{align*}
	Then we have for all $s\in\R$ that under the null $\mathcal H_{0,n}$
	\begin{align}\label{eqn:limit_null}
		nS_n(s) - r_n \overset{\mathrm d}{\longrightarrow} Y,
	\end{align}	
	where $Y+\log(4)$ is standard Gumbel distributed, i.e. $x\mapsto \exp(-4\exp(-x))$ is the distribution function of $Y$.
\end{theorem}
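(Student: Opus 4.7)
The plan is to extend the argument of \citet{JagerWellner2007} from the range $s\in[-1,2]$ to all $s\in\R$. The cornerstone is \citet{Jaeschke1979}, which yields the Gumbel-type limit $nS_n(2)-r_n\overset{\mathrm d}{\longrightarrow} Y$ for (an equivalent form of) the higher-criticism statistic. By Slutsky's lemma, the theorem reduces to showing
\begin{align*}
  nS_n(s) - nS_n(2)\overset{\mathrm P_0^n}{\longrightarrow} 0\qquad\text{for every fixed }s\in\R,
\end{align*}
which is what Jager and Wellner established for $s\in[-1,2]$; I would adapt their strategy to cover the remaining values.

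First I reduce to uniform observations $U_i:=F_0(X_{n,i})$ under $\mathcal H_{0,n}$. Since $\phi_s$ is convex, each summand of $K_s(u,v)$ is a perspective of $\phi_s$ in $v$, so $v\mapsto K_s(u,v)$ is convex. Because $\mathbb F_n$ is constant on $[U_{i:n},U_{i+1:n})$, the supremum of $K_s(i/n,\cdot)$ there is attained at an endpoint, giving
\begin{align*}
  S_n(s)=\max_{1\le i\le n-1}\max\bigl\{K_s(i/n,U_{i:n}),\,K_s(i/n,U_{i+1:n})\bigr\}.
\end{align*}
I then split $S_n(s)=\max(A_n(s),B_n(s))$ at a slowly growing threshold $k_n\to\infty$, for instance $k_n=\lceil(\log\log n)^4\rceil$, with $A_n(s)$ the maximum over the central range $k_n\le i\le n-k_n$ and $B_n(s)$ over the extreme range $i<k_n$ or $i>n-k_n$.

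On the central range, classical oscillation bounds for the uniform empirical process give $|U_{i:n}-i/n|\le C\sqrt{(i/n)(1-i/n)(\log\log n)/n}$ uniformly with probability tending to $1$, so the quantities $u/v-1$ and $(1-u)/(1-v)-1$ (with $u=i/n$ and $v$ equal to $U_{i:n}$ or $U_{i+1:n}$) are uniformly of order $\sqrt{(\log\log n)/i}$. Because $\phi_s(1)=\phi_s'(1)=0$ and $\phi_s''(1)=1$ (indeed $\phi_s''(x)=x^{s-2}$), the Taylor expansion $\phi_s(1+h)=h^2/2+O(|h|^3)$ gives $|K_s-K_2|(u,v)=O\bigl((\log\log n)^{3/2}/(n\sqrt i)\bigr)$ uniformly in the central range, so that $n|A_n(s)-A_n(2)|=O\bigl((\log\log n)^{3/2}/\sqrt{k_n}\bigr)=o(1)$.

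The main obstacle is the extreme tail $B_n(s)$. By the symmetry $K_s(u,v)=K_s(1-u,1-v)$ together with $U_{i:n}\overset{\mathrm d}{=}1-U_{n-i+1:n}$, it suffices to treat $i<k_n$. Using the R\'enyi representation $U_{i:n}\overset{\mathrm d}{=}(E_1+\cdots+E_i)/(E_1+\cdots+E_{n+1})$ with i.i.d.\ standard exponentials $E_j$, one has $nK_s(i/n,U_{i:n})\approx (E_1+\cdots+E_i)\,\phi_s\bigl(i/(E_1+\cdots+E_i)\bigr)$, and for $s\notin[-1,2]$ the function $\phi_s$ grows only polynomially at $0$ and $\infty$. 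Combining the gamma-type density of the partial sum near zero with this polynomial growth should yield, for every $s\in\R$ and every $\varepsilon>0$,
\begin{align*}
  P_0^n\bigl(nB_n(s)>\varepsilon\log\log n\bigr)\to 0,
\end{align*}
i.e.\ $nB_n(s)=o_{P_0^n}(r_n)$. Since $nA_n(2)-r_n\overset{\mathrm d}{\longrightarrow}Y$ and $r_n\to+\infty$, the central term dominates the maximum defining $S_n$, so that $nS_n(s)=nA_n(s)+o_{P_0^n}(1)=nA_n(2)+o_{P_0^n}(1)=nS_n(2)+o_{P_0^n}(1)$, which completes the reduction. The hardest step is controlling the tail probability above uniformly in $i<k_n$ for arbitrary $s$, because the heaviness of the single term $nK_s(1/n,U_{1:n})$ degrades as $|s|$ grows; this is where the careful polynomial-vs-exponential balance has to be carried out.
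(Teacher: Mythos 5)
Your proposal follows essentially the same strategy as the paper: reduce to the uniform case, express the discrepancy $nS_n(s)-nS_n(2)$ through a third-order Taylor remainder of $\phi_s$ (equivalently, of $u\mapsto K_s(u,v)$) around the diagonal, split the supremum into a central range and an extreme range, handle the central range using the fact that the empirical ratio is uniformly close to one, and show the extreme range contributes $o_P(\log\log n)$ so that it is dominated by the Jaeschke Gumbel constant $r_n$. The paper packages the Taylor step more slickly as the multiplicative identity $K_s(\mathbb F_n(x),x)=K_2(\mathbb F_n(x),x)\bigl(1+\tfrac{s-2}{3}R_{n,x,s}\bigr)$, but your second-order expansion $\phi_s(1+h)=h^2/2+O(|h|^3)$ is the same computation. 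Your threshold $k_n=\lceil(\log\log n)^4\rceil$ in the order-statistic index and the paper's $d_n=n^{-1}(\log n)^5$ in position both satisfy the needed growth conditions; the choice is not critical.

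The genuine difference, and the place where your outline is still a sketch rather than a proof, is the control of the extreme range. You appeal to a R\'enyi-representation plus gamma-density argument and frankly label it the ``hardest step.'' The paper instead invokes a ready-made tool, namely Wellner's tightness result (Lemma~\ref{lem:Chang+Wellner}, in particular \eqref{eqn:lem:Wellner}), which says that $\sup_{x>X_{1:n}}(\mathbb F_n(x)/x)^t$ is tight \emph{for every $t\in\R$}, together with the Chang/Wellner rate \eqref{eqn:lem:Chang+Wellner_sqrt2} giving $\sup|\mathbb F_n/x-1|=O_P(c_n^{-1/2})$ on the central range. Tightness of $(\mathbb F_n(x)/x)^{s-3}$ is exactly what controls the heavy power-law tails of $\phi_s$ near $0$ and $\infty$ uniformly in $|s|$, and it replaces the union-bound/gamma-tail calculation you would otherwise have to carry out by hand. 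So your decomposition and the conclusion $nB_n(s)=o_P(\log\log n)$ are correct, but if you want a complete argument you should either carry out the uniform-in-$i$ tail estimate for $S_i\phi_s(i/S_i)$ carefully (the single term $i=1$ already has only polynomial moments once $|s|$ is large), or simply cite the Chang/Wellner/Jaeschke package as the paper does.
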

At least for $S_n(2)$ it is known that the convergence rate is really slow, see \citet{KhmaladzeShin2001}. Since the basic idea of the proof of \Cref{theo:null} is to approximate $nS_n(s)$ by $nS_n(2)$ the same bad rate can be expected for all $s\in\R$ or an even worse rate due to an additional approximation error. Consequently, we cannot recommend using a critical value based on the convergence result in \Cref{theo:null} unless the sample size $n$ is really huge. Since the noise distribution is assumed to be known, a possibility to estimate the $\alpha$-quantile of $S_n(s)$ is to use a standard Monte-Carlo Simulation. Alternatively, you can find finite recursion formulas for the exact finite distribution in the literature, see \citet{JagerWellner2004} (for $s=0$, up to $n=1000$) and \citet{KhmaladzeShin2001} (for $s=2$, up to $n=10^4$). 

\section{Asymptotic power under the alternative}\label{sec:alt}

In this section we present a tool to analyze the asymptotic power behavior of all family members under the alternative $\mathcal H_{n,1}$. This was already done by \citet{DitzhausJanssen2017}  for the higher criticism test, i.e. for $s=2$. We show that the main tool to obtain their results can be used for general $s\in\R$. To be more specific, this tool is the following function $H_n:(0,1/2)\to (0,\infty)$ given by
\begin{align}\label{eqn:def_H}
	H_n(v)=  \frac{\sqrt{n} \varepsilon_{n}}{ \sqrt{v}} \Bigl(  | \mu_n^{F_0}(0,v]-v| + |\mu_n^{F_0}(1-v,1)-v| \Bigr), \, v\in\Bigl(0,\frac{1}{2}\Bigr),
\end{align}
where $\mu_n^{F_0}$ is the distribution of $F_0(X_{n,1})$ if $X_{n,1}\sim \mu_n$. The basic idea is to compare the tails near to $0$ as well as near to $1$ of the $p$-value $F_0(X_{n,1})$ if $X_{n,1}$ follows the signal distribution $\mu_n$ and the noise distribution $P_0$, respectively. Due to symmetry $H_n(v)$ does not change if we consider the $p$-value $1-F_0(X_{n,1})$ instead. Moreover, due to this it is sufficient to consider $v\in(0,1/2)$ instead of $v\in(0,1)$.  
\begin{theorem}[Complete detection]\label{theo:full_power}
	Suppose that there is a sequence $(v_n)_{n\in\N}$ in $(0,1/2)$ such that $v_nn\to \infty$ and $(\log\log(n))^{-1}H_n(v_n)\to\infty$. 
	Then we have for all $s\in\R$  
	\begin{align}\label{eqn:full_power_result}
		nS_n(s)-r_n \longrightarrow \infty \text{ in }Q_{n}^n\text{-probability}.
	\end{align}
\end{theorem}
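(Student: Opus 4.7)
The plan is to reduce complete detection to evaluating $nK_s$ at a single, carefully chosen deterministic level. By the symmetry $K_s(u,v)=K_s(1-u,1-v)$ and since $H_n(v_n)$ is a sum of two tail discrepancies, I assume without loss of generality that the lower-tail summand dominates at least half, i.e., writing $\delta_n:=\mu_n^{F_0}((0,v_n])-v_n$, that $\sqrt{n}\varepsilon_n|\delta_n|/\sqrt{v_n}\ge H_n(v_n)/2$; otherwise the same argument applies at $F_0^{-1}(1-v_n)$. Set $x_n:=F_0^{-1}(v_n)$, $U_n:=\mathbb{F}_n(x_n)$, and $p_n:=v_n+\varepsilon_n\delta_n$, so that $nU_n$ is $\mathrm{Bin}(n,p_n)$-distributed under $Q_n^n$. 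Because $nv_n\to\infty$ and $v_n<1/2$, elementary binomial tail bounds give $Q_n^n(X_{1:n}\le x_n<X_{n:n})\to 1$, and on this event $S_n(s)\ge K_s(U_n,v_n)$; it therefore suffices to prove $nK_s(U_n,v_n)-r_n\to\infty$ in $Q_n^n$-probability.

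Writing $R_n:=\varepsilon_n|\delta_n|/v_n$, two elementary identities drive the analysis: $H_n(v_n)^2\asymp nv_nR_n^2$ and $n\varepsilon_n|\delta_n|=nv_nR_n\asymp\sqrt{nv_n}\cdot H_n(v_n)$, where the implicit constants come from the tail-dominance assumption. Together with the hypotheses they force both $H_n(v_n)^2\to\infty$ and $n\varepsilon_n|\delta_n|\to\infty$. Chebyshev's inequality then yields
\[
Q_n^n\bigl(|U_n-p_n|>\tfrac{1}{2}\varepsilon_n|\delta_n|\bigr) \le \frac{4p_n}{n\varepsilon_n^2\delta_n^2} \le \frac{4}{H_n(v_n)^2} + \frac{4}{n\varepsilon_n|\delta_n|} \to 0,
\]
so $|U_n-v_n|\ge\tfrac{1}{2}\varepsilon_n|\delta_n|$ with $Q_n^n$-probability tending to $1$.

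For the divergence bound I use $K_s(u,v)\ge v\phi_s(u/v)$ and split on the ratio $y_n:=U_n/v_n$. \textbf{(i) Bounded ratio.} If $R_n$ remains bounded --- automatic when $\delta_n\le 0$, since then $|\delta_n|\le v_n$ forces $R_n\le\varepsilon_n\le 1$ --- then on the concentration event $y_n\in[0,M]$ for some constant $M$. Strict convexity of $\phi_s$ with $\phi_s''(y)=y^{s-2}$ and $\phi_s''(1)=1$, combined with positivity of $\phi_s$ off $\{1\}$ (and the blow-up $\phi_s(0)=+\infty$ when $s\le 0$), yields $c_s>0$ such that $\phi_s(y)\ge c_s\min(1,(y-1)^2)$ on $[0,M]$. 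Hence
\[
nK_s(U_n,v_n)\ge c_s\min\bigl(nv_n,\, n(U_n-v_n)^2/v_n\bigr)\ge c\,H_n(v_n)^2\gg(\log\log n)^2,
\]
using $R_n\le M$ to get $nv_n\ge c'H_n(v_n)^2$ and the concentration event to get $n(U_n-v_n)^2/v_n\ge H_n(v_n)^2/16$. \textbf{(ii) Large ratio.} If $R_n\to\infty$ (forcing $\delta_n>0$), then $y_n\to\infty$ and the explicit asymptotics $\phi_s(y)\sim y^s/(s(s-1))$ for $s>1$, $\phi_1(y)\sim y\log y$, and $\phi_s(y)\sim y/(1-s)$ for $s<1$ give
\[
nv_n\phi_s(y_n)\ge c\cdot n\varepsilon_n|\delta_n|\cdot R_n^{\max(s-1,0)}\ge c'\sqrt{nv_n}\cdot H_n(v_n)\gg\log\log n,
\]
with an additional $\log R_n$ factor at $s=1$.

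The principal technical obstacle is the large-ratio regime for $s\le 1$, where $\phi_s$ grows only linearly (or like $y\log y$) at infinity and no ratio factor improves the estimate beyond $n\varepsilon_n|\delta_n|$. The dominance of this quantity over $\log\log n$ is not obvious from $H_n\gg\log\log n$ alone; it must be extracted from the joint use of both hypotheses via $n\varepsilon_n|\delta_n|\asymp\sqrt{nv_n}\cdot H_n(v_n)$, exploiting that both factors individually tend to infinity. Combining the two regimes with $r_n\sim\log\log n$ yields $nS_n(s)-r_n\to\infty$ in $Q_n^n$-probability; the remaining items --- the symmetry reduction, the inclusion event $x_n\in[X_{1:n},X_{n:n})$, and the borderline sub-regime $y_n\to 0$ in (i), where continuity and positivity of $\phi_s$ on $[0,1)$ still deliver a uniform lower bound --- are routine.
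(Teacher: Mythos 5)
Your proposal is correct and follows essentially the same route as the paper's proof: lower-bound $S_n(s)$ by $v_n\phi_s(\mathbb{F}_n(v_n)/v_n)$ at a single level $v_n$, concentrate the empirical c.d.f. via Chebyshev, and split into a bounded-ratio regime (using the quadratic behaviour of $\phi_s$ near $1$) and a diverging-ratio regime (using the at-least-linear growth of $\phi_s$ at infinity), with the key observation in the latter that $n\varepsilon_n|\delta_n|\gtrsim\sqrt{nv_n}\,H_n(v_n)\gg\log\log n$ because both factors diverge. The paper packages the concentration through \Cref{lem:Chang+Wellner} and a quoted estimate from Ditzhaus--Janssen, and in the bounded case uses a Taylor expansion with random intermediate point where you use $\phi_s(y)\ge c_s\min(1,(y-1)^2)$, but these are cosmetic variations of the same argument.
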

Under \eqref{eqn:full_power_result} there exists a sequence of critical values $c_{n}(s)$ for all $s\in\R$ such that the sum of type 1 and 2 error probabilities of the test $\varphi_{n}(s)= \mathbf{1}\{S_n(s)>c_{n}(s) \}$ tends to $0$. In other words, by using $S_n(s)$ we can completely separate $\mathcal H_{0,n}$ and $\mathcal H_{1,n}$ asymptotically. \\
For the next result recall that   the null (product) distribution $P_0^n$ and the alternative (product) distribution $Q_n^n$ are said to be mutually contiguous if for all sequences $(A_n)_{n\in\N}$ of sets: $P_0^n(A_n)\to 0$ if and only if $Q_n^n(A_n)\to 0$. By the first Lemma of Le Cam $P_0^n$ and $Q_n^n$ are mutually contiguous if and only if the limits (in distribution) of the likelihood ratio test statistic are real-valued under the null $P_0^n$ as well as under the alternative $Q_n^n$. This typically holds on the detection boundary, see \citet{DitzhausJanssen2017}. Hence, mutual contiguity is no real restriction when discussing the behavior on this boundary. 
\begin{theorem}[No power]\label{theo:undetect}
	Suppose that $P_0^n$ and $Q_n^n$ are mutual contiguous and that there are constants $\kappa,c_{1,n},c_{2,n},c_{3,n},c_{4,n}\in(0,1)$ such that
	\begin{align}
		&\sqrt{\log\log(n)}\sup\{ H_n(v):v\in[c_{1,n},c_{2,n}]\cup[c_{3,n},c_{4,n}]\}\to 0,\textrm{ where }\label{eqn:undetect_Hn}\\
		&\frac{\log( c_{1,n})}{\log(n)}\to -1, \;\frac{\log(c_{4,n}) }{\log(n)}\to 0,\; \textrm{ and }\frac{\log (c_{2,n})}{\log(n)},\frac{\log(c_{3,n})}{\log(n)}\to -\kappa.\label{eqn:undetect_constants} 
	\end{align}  
	Then \eqref{eqn:limit_null} also holds under the alternative $\mathcal H_{n,1}$. 
\end{theorem}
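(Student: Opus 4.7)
The plan is to reduce to the higher criticism case $s=2$ using mutual contiguity, and then to split the supremum defining $S_n(2)$ according to the $F_0$-scale and control each piece separately.

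First step: inspection of the proof of Theorem~\ref{theo:null} yields the stronger statement $|nS_n(s)-nS_n(2)|\overset{\mathrm P_0^n}{\longrightarrow}0$ for every $s\in\R$. Convergence in probability to a constant is preserved under a contiguous change of measure (apply Le~Cam's first lemma to the event $\{|nS_n(s)-nS_n(2)|>\varepsilon\}$), so the same approximation holds in $Q_n^n$-probability. By Slutsky it therefore suffices to establish \eqref{eqn:limit_null} for $s=2$ under $\mathcal H_{1,n}$.

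Second step: for $s=2$ one has the closed form $K_2(u,v)=(u-v)^2/(2v(1-v))$, and one decomposes
\begin{align*}
\mathbb F_n(x)-F_0(x) = \bigl(\mathbb F_n(x)-Q_n(x)\bigr) + \varepsilon_n\bigl(\mu_n-F_0\bigr)\bigl((-\infty,x]\bigr).
\end{align*}
By the identity $\mu_n((-\infty,x])=\mu_n^{F_0}(0,F_0(x)]$, the second (purely deterministic) summand, after dividing by $\sqrt{F_0(x)(1-F_0(x))/n}$, is bounded by $H_n(v)$ with $v=F_0(x)$ in the left tail and by $H_n(1-v)$ in the right. I would split the sup according to $v=F_0(x)$ into: (a) the extreme tails $v<c_{1,n}$ or $v>1-c_{1,n}$, handled by the standard Eicker--Jaeschke tail inequality already used for Theorem~\ref{theo:null} (and transferred from $P_0^n$ to $Q_n^n$ via contiguity); (b) the active tails $v\in[c_{1,n},c_{2,n}]\cup[c_{3,n},c_{4,n}]$ and their right-tail mirrors; (c) the thin boundary gap $v\in(c_{2,n},c_{3,n})$, whose logarithmic width is $o(\log n)$ by~\eqref{eqn:undetect_constants}, controlled by a modulus-of-continuity estimate giving an $O_P(1)$ contribution; (d) the central block $v\in[c_{4,n},1-c_{4,n}]$, where the sup is $O_P(1)$ and hence negligible after subtracting $r_n$.

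Third step (the main obstacle): on the active region, the deterministic shift contributes a term of size $H_n(v)=o((\log\log n)^{-1/2})$ to the standardized quantity $\sqrt{n}\,|\mathbb F_n-F_0|/\sqrt{F_0(1-F_0)}$ by~\eqref{eqn:undetect_Hn}, whereas its stochastic part is $O_P(\sqrt{\log\log n})$. Expanding the square, the cross-term and the squared shift are both $o_P(1)$ uniformly in $v$. Consequently $nS_n(2)-r_n$ equals, up to $o_{Q_n^n}(1)$, the analogous quantity $n\sup_x(\mathbb F_n(x)-Q_n(x))^2/(2F_0(x)(1-F_0(x)))-r_n$. Since $F_0/Q_n\to 1$ uniformly on the active region by the same $H_n$-bound, this last statistic has the Gumbel-type limit $Y$ by the classical higher-criticism limit theorem applied to the i.i.d.\ $Q_n$-sample after the $Q_n$-transformation. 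The delicate point is establishing the requisite one-sided Eicker--Jaeschke inequality for the centered $Q_n$-empirical process uniformly on the logarithmic window $[c_{1,n},c_{4,n}]$; this reduces to the corresponding $P_0^n$-statement because $Q_n\asymp F_0$ there, itself a consequence of~\eqref{eqn:undetect_Hn}.
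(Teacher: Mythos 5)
Your outer strategy matches the paper's: reduce to $s=2$ via the Taylor approximation from the proof of \Cref{theo:null} transferred by contiguity, and then handle the higher-criticism statistic under $Q_n^n$. Your first step is in fact a clean and correct way to carry out the reduction, and is equivalent to what the paper does (the paper transfers the individual pieces --- the remainder bounds \eqref{eqn:Rj} and the tail estimate \eqref{eqn:Ks-rn_to_-infty} --- rather than the single difference $|nS_n(s)-nS_n(2)|$, but the effect is the same). The key divergence is your second and third steps: the paper does not re-derive the $s=2$ convergence under $Q_n^n$ at all. It invokes Theorem~4.2 of Ditzhaus and Janssen (2017) as a black box, which yields $b_n\mathbb{Z}_n(0,1)-c_n\to Y$ under $Q_n^n$ directly from hypotheses \eqref{eqn:undetect_Hn}--\eqref{eqn:undetect_constants}; everything else in the paper's proof is contiguity bookkeeping.

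Your attempt to re-prove that cited result has genuine gaps. On the window $(c_{2,n},c_{3,n})$ the hypothesis \eqref{eqn:undetect_Hn} gives no control on $H_n$, so the deterministic shift $\sqrt{n}\varepsilon_n\bigl(\mu_n^{F_0}(0,v]-v\bigr)/\sqrt{v(1-v)}$ is a priori unbounded there, and you cannot simply expand the square as you do on the active region. Moreover, even the purely stochastic part over that window is not $O_P(1)$: the logarithmic width $\log(c_{3,n}/c_{2,n})$ is only required to be $o(\log n)$, and by the standard Ornstein--Uhlenbeck scaling for $\mathbb{Z}_n$ the supremum over a logarithmic window of width $\ell_n$ is of order $\sqrt{2\log \ell_n}$; with $\ell_n = \log n/\log\log n$, say, that is still of order $\sqrt{\log\log n}=b_n$, not $O_P(1)$, so your modulus-of-continuity claim in step~(c) does not hold at the stated level of generality. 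The same problem appears on the central block $(c_{4,n},1-c_{4,n})$ in step~(d): $H_n$ is defined only on $(0,1/2)$, the condition \eqref{eqn:undetect_Hn} controls it only on $C_n$, and the shift at $v$ near $1/2$ need not be small. Closing these two gaps (showing the supremum is not attained on $(c_{2,n},c_{3,n})$ or the centre even though $H_n$ is uncontrolled there, using the specific structure of $\varepsilon_n(\mu_n^{F_0}-\mathrm{id})$ and the scale constraints in \eqref{eqn:undetect_constants}) is precisely the nontrivial content of the cited Ditzhaus--Janssen theorem, which your sketch implicitly assumes away.
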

Under \eqref{eqn:undetect_Hn} and \eqref{eqn:undetect_constants} all tests of the form  $\varphi_{n}(s)= \mathbf{1}\{S_n(s)>c_{n}(s) \}$ cannot distinguish between $\mathcal H_{0,n}$ and $\mathcal H_{1,n}$ asymptotically, i.e. the sum of type 1 and 2 error probabilities tends to $1$.
As we will prove, the supremum in \eqref{eqn:test_statistic} is not taken by values of $x$ from the extreme tails or from the middle with probability tending to $1$, in other words for $x$ with $F_0(x),1-F_0(x)\notin C_n=[c_{1,n},c_{2,n}]\cup[c_{3,n},c_{4,n}]$. To be more specific, we verify that under $\mathcal H_{0,n}$ (and so under $\mathcal H_{1,n}$ if $P_0^n$ and $Q_n^n$ are mutually contiguous)
\begin{align*}
	n\Bigl(  \sup_{X_{1:n}\leq x < X_{n:n}: F_0(x),1-F_0(x)\notin C_n} K_s(\mathbb{F}_n(x),F_0(x))\Bigr)-r_n\to -\infty.
\end{align*}
This briefly explains why in \eqref{eqn:undetect_Hn} we take the supremum over $C_n$ instead over $(0,1/2)$. \\

\begin{remark}[Simplification for the sparse case]\label{rem:simp_Hn}
	Typically, in the sparse case we even have $\sqrt{\log\log(n)}n\varepsilon_{n}^2\to 0$. Then it is easy to see that the statements in \Cref{theo:full_power,theo:undetect} remain true if $H_n(v)$ is replaced by
		\begin{align*}%\label{eqn:def_tilde_H}
			\widetilde H_n(v)= \sqrt{n} \varepsilon_{n} v^{-\frac{1}{2}} \Bigl(  \mu_n(0,v] + \mu_n[1-v,1) \Bigr).
		\end{align*}
\end{remark}

\section{Application}\label{sec:appl}

\subsection{Extension of \citet{Cai_Wu_2014}}
Throughout this section we consider (only) the sparse case
\begin{align*}%\label{eqn:cai+wu_eps}
	\varepsilon_{n}=n^{-\beta}, \,\beta\in\Bigl(\frac{1}{2},1\Bigr].
\end{align*}
Starting with a fixed noise distribution $P_0$ and a fixed sequence $(\mu_n)_{n\in\N}$ of signal distributions, \citet{Cai_Wu_2014} developed a technique to calculate a detection threshold $\beta^{\#}$ for the parameter $\beta$.
\begin{enumerate}[(i)]
	\item (Undetectable) \label{enu:theo:cai_wu_uninfo}  If $\beta$ exceeds $\beta^\#$ then there is no sequence of tests, which can distinguish between $\mathcal H_{0,n}$ and $\mathcal H_{1,n}$ asymptotically.
	
	\item (Completely detectable) \label{enu:theo:cai_wu_fullinfo} If $\beta$ is smaller than $\beta^\#$ then there is a sequence of likelihood ratio tests, which can completely separate $\mathcal H_{0,n}$ and $\mathcal H_{1,n}$ asymptotically.
\end{enumerate}
Let us begin this section by recalling the results of \citet{Cai_Wu_2014}. We first present the special case of standard normal noise.
\begin{theorem}[Theorem 1 and 4 in \cite{Cai_Wu_2014}] \label{theo:Cai+Wu_norm}
	Define for all $x>0$
	\begin{align*}
		\widetilde h_{n}(x)= \log\Bigl( \frac{\mathrm{ d } \mu_{n}}{\,\mathrm{ d }P_{0}}( x \sqrt{2\log(n)}) \Bigr). 
	\end{align*}
	Suppose that 
	\begin{align*}%\label{eqn:theo:Cai+Wu_norm_con}
		\sup \Bigl\{ \Bigl | \frac{\widetilde h_n(x)}{\log(n)}-\alpha(x) \Bigr| :x\in\R\Bigr\} \to 0
	\end{align*}
	for a measurable $\alpha:\R\to\R$. Then the detection boundary is given by
	\begin{align*}%\label{eqn:cai_wu_norm_beta}
		\beta^{\#}=\frac{1}{2}+  \essup_{t\geq 0} \Bigl\{ \alpha(t)-t^2+ \frac{\min(t^2,1)}{2} \Bigr\},
	\end{align*}
	where $\essup_{x\geq 0}{g(x)}=\inf\{ K\in\R: \lebesgue( g \geq K) = 0\}$ denotes the essential supremum of a measurable function $g:[0,\infty)\to\R$. Moreover, if $\beta<\beta^{\#}$ then there is a sequence $(c_n)_{n\in\N}$ of critical values such that $\varphi_n=\mathbf{1}\{S_n(1/2)>c_n\}$ (higher criticism test) can completely separate $\mathcal H_{0,n}$ and $\mathcal H_{1,n}$ asymptotically. 
\end{theorem}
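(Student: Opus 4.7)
The plan is to derive both halves from the tools introduced earlier in this paper: \Cref{theo:full_power} yields the completely detectable direction $\beta<\beta^{\#}$, and \Cref{theo:undetect} (together with a truncated second-moment argument to secure mutual contiguity on the boundary) yields the undetectable direction $\beta>\beta^{\#}$. The core task is to translate the assumption on $\widetilde h_n$ at scale $\sqrt{2\log n}$ into a lower bound on $H_n$ at a carefully chosen tail level $v_n$, or equivalently on $\widetilde H_n$, which is legitimate here by \Cref{rem:simp_Hn} because $\sqrt{\log\log n}\,n\varepsilon_n^2=\sqrt{\log\log n}\,n^{1-2\beta}\to 0$ for $\beta>1/2$.

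For $\beta<\beta^{\#}$, the definition of $\essup$ produces $t^{*}>0$ and $\delta>0$ with $[t^{*}-\delta,t^{*}+\delta]\subseteq T:=\{t\ge 0:\alpha(t)-t^2+\min(t^2,1)/2>\beta-\tfrac{1}{2}\}$. Set $\tau=\min(t^{*}-\delta,1-\delta')$ for an auxiliary small $\delta'>0$ and put
\[
v_n=1-\Phi\bigl(\tau\sqrt{2\log n}\bigr),
\]
so that Mills' ratio gives $v_n\sim n^{-\tau^2}/(\tau\sqrt{4\pi\log n})$, whence $nv_n\to\infty$ and $v_n^{-1/2}$ is of order $n^{\tau^2/2}$ up to $\log$ factors. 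Changing variables $u=x/\sqrt{2\log n}$,
\[
\mu_n\bigl([\tau\sqrt{2\log n},\infty)\bigr)=\frac{\sqrt{2\log n}}{\sqrt{2\pi}}\int_{\tau}^{\infty}\exp\!\bigl(\widetilde h_n(u)-u^2\log n\bigr)\,\mathrm{d}u,
\]
and restricting the integral to $[t^{*}-\delta,t^{*}+\delta]\subset[\tau,\infty)$ with the uniform convergence $\widetilde h_n/\log n\to\alpha$ gives $\mu_n([\tau\sqrt{2\log n},\infty))\ge n^{\alpha(t^{*})-t^{*2}-\eta}$ for any $\eta>0$ and large $n$. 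Substituting,
\[
\widetilde H_n(v_n)\ge n^{\,1/2-\beta+\tau^2/2+\alpha(t^{*})-t^{*2}-\eta},
\]
and letting $\delta,\delta'\downarrow 0$ drives $\tau^2/2$ up to $\min(t^{*2},1)/2$, so the exponent is strictly positive for $\eta$ small enough by the choice of $t^{*}$. Thus $\widetilde H_n(v_n)/\log\log n\to\infty$, and \Cref{theo:full_power} supplies critical values $c_n$ such that $\varphi_n=\mathbf{1}\{S_n(1/2)>c_n\}$ completely separates $\mathcal H_{0,n}$ and $\mathcal H_{1,n}$.

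For $\beta>\beta^{\#}$, the reverse $\essup$ inequality $\alpha(t)-t^2+\min(t^2,1)/2<\beta-\tfrac12$ holds Lebesgue-a.e.\ on $[0,\infty)$. Choosing $c_{1,n}\asymp 1/n$, $c_{4,n}$ a small constant, and $c_{2,n},c_{3,n}\asymp n^{-\kappa}$ for suitable $\kappa\in(0,1)$ (so that \eqref{eqn:undetect_constants} holds), the same Laplace-type computation run as an upper bound yields $\sqrt{\log\log n}\sup_{v\in C_n}H_n(v)\to 0$, while mutual contiguity on the boundary follows from a Le Cam-type truncated second-moment estimate in the spirit of Cai and Wu. \Cref{theo:undetect} then forces the null Gumbel limit to persist under $Q_n^n$, precluding any test of the form $\mathbf{1}\{S_n(1/2)>c_n\}$.

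The main obstacle is the regime $t^{*}>1$ in the completely detectable part: the constraint $nv_n\to\infty$ in \Cref{theo:full_power} forces $\tau<1$, which is precisely why the boundary formula contains $\min(t^2,1)/2$ rather than $t^2/2$. Handling it requires pairing a ``truncated'' cutoff $v_n$ (at a scale just above $1/n$) with tail mass of $\mu_n$ well above $\sqrt{2\log n}$, and verifying that the Laplace-type integral still captures the full $n^{\alpha(t^{*})-t^{*2}}$ contribution even when the lower limit $\tau$ sits far below $t^{*}$. A further subtlety is the use of $\essup$ rather than a pointwise supremum, which forces the Laplace estimate to be carried out over an interval of positive Lebesgue measure inside $T$, not at a single point.
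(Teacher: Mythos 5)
The first half of your plan (the case $\beta<\beta^{\#}$ via \Cref{rem:simp_Hn} and \Cref{theo:full_power}, with $v_n$ chosen as a normal tail probability and a Laplace-type lower bound for $\mu_n$ after the substitution $x=y\sqrt{2\log n}$) is exactly the route the paper takes in its proofs of \Cref{theo:cai_wu_ext,theo:cai_wu_ext_norm}, which are the arguments this quoted theorem is meant to be subsumed by. But your second half proves the wrong statement. "Detection boundary" here means, per the enumerated definition preceding \Cref{theo:Cai+Wu_norm}, that for $\beta>\beta^{\#}$ \emph{no sequence of tests whatsoever} can distinguish $\mathcal H_{0,n}$ from $\mathcal H_{1,n}$. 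Your plan invokes \Cref{theo:undetect}, whose conclusion is only that tests of the form $\mathbf{1}\{S_n(s)>c_n(s)\}$ have asymptotically trivial power; it says nothing about arbitrary tests, and it is a tool designed for the boundary case $\beta=\beta^{\#}$ (you in fact write "mutual contiguity on the boundary" while arguing the regime $\beta>\beta^{\#}$). The correct argument, and the one the paper runs in the case $\beta>\beta^*$ of \Cref{theo:cai_wu_ext,theo:cai_wu_ext_norm}, is a truncated second-moment criterion (Lemma 3.12 of \citet{DitzhausJanssen2017}, equivalently Cai--Wu's own computation): one shows $n^{1-\beta}\mu_n\bigl(n^{-\beta}\tfrac{\mathrm d\mu_n}{\mathrm dP_0}>x\bigr)\to 0$ and $n^{1-2\beta}\int\mathbf{1}\{n^{-\beta}\tfrac{\mathrm d\mu_n}{\mathrm dP_0}\le x\}\bigl(\tfrac{\mathrm d\mu_n}{\mathrm dP_0}\bigr)^2\,\mathrm dP_0\to 0$, via the integrals $I_{n,1},I_{n,2},I_{n,3}$, so that the experiments $\{P_0^n,Q_n^n\}$ merge and the sum of error probabilities of \emph{every} test tends to $1$. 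Contiguity plus \Cref{theo:undetect} cannot deliver this.

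Two further points in the detectable half. First, you flag the regime $t^{*}>1$ as the "main obstacle" but do not resolve it; the paper's device is simply to take $\tau_n=1-\widetilde\tau_n$, i.e.\ $v_n$ of order $\log(n)/n$ (so $nv_n\to\infty$ still holds), and to observe that \emph{all} the signal mass at scales $t\ge 1$ is contained in the tail interval of length $v_n$, so the factor $v_n^{-1/2}\approx n^{1/2}$ saturates and $\widetilde H_n(v_n)\gtrsim n^{1/2-\beta+1/2+\alpha(t^{*})-t^{*2}-\eta}$ up to logarithms, which diverges precisely when $\beta<1/2+\alpha(t^{*})-t^{*2}+1/2$; this is where $\min(t^2,1)/2$ comes from, and it needs to be written out, not postponed. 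Second, your extraction of an interval $[t^{*}-\delta,t^{*}+\delta]$ inside the super-level set of $\alpha(t)-t^2+\min(t^2,1)/2$ is not justified: the essential supremum only gives a measurable set of positive Lebesgue measure, which need not contain any interval. The fix is the pigeonhole step the paper uses in the proofs of \Cref{theo:cai_wu_ext,theo:cai_wu_ext_norm}: partition $(0,1)$ into finitely many cells of length $\delta$, pick a cell $j_n$ carrying at least a fixed fraction $\delta\kappa/2$ of the measure, set $\tau_n$ equal to that cell's left endpoint, and integrate the Laplace bound over the good set intersected with that cell. With these repairs your detectable half becomes the paper's argument; the undetectable half needs to be replaced.
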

The results concerning the normal location model mentioned in \Cref{sec:intro} follow immediately from this Theorem, for details see Section V-A and V-C in \cite{Cai_Wu_2014}. More generally, \Cref{theo:Cai+Wu_norm} can be applied to the model given by $P_0=N(0,1)$ and $\mu_n=\widetilde{\mu}_n*N(0,1)$, where $*$ denotes the convolution. For details we refer the reader to Corollary 1 and Section V-B in \cite{Cai_Wu_2014}. An example for this convolution idea is the heteroscedastic normal location model, where the variance of the signal distribution $\mu_n=N(\vartheta_n,\sigma_0^2)$ may differ from 1, i.e. $\sigma_0^2\neq 1$ is allowed. Note that \citet{CaiJengJin2011} already studied the optimality for the higher criticism test under this model.  \\
For non-normal noise distributions $P_0$ the following theorem can be applied:

\begin{theorem}[Theorem 3 in \cite{Cai_Wu_2014}] \label{theo:Cai+Wu}
	Define for all $t>0$
	\begin{align*}
		&h_{n,1}(t)= \log \Bigl( \frac{\mathrm{ d } \mu_{n}}{\,\mathrm{ d }P_{0}}\bigl(F_{0}^{-1}\bigl( n^{-t} \bigr) \bigr) \Bigr),\, 
		h_{n,2}(t)=\log\Bigl(  \frac{\mathrm{ d } \mu_{n}}{\,\mathrm{ d }P_{0}}\bigl(F_{0}^{-1}\bigl(1- n^{-t} \bigr) \bigr) \Bigr) \\
		&\textrm{and }\, h_{n}(t)= \max\bigl\{ h_{n,1}(t), h_{n,2}(t) \bigr\}. 	
	\end{align*}
	Suppose that 
	\begin{align}\label{eqn:theo:Cai+Wu_con}
		 \sup \Bigl\{ \Bigl | \frac{h_n(t)}{\log(n)}-\gamma(t) \Bigr| :t \geq \frac{\log(2)}{\log(n)}\Bigr\} \to 0
	\end{align}
	for a measurable $\gamma:[0,\infty)\to\R$. Then the detection threshold for $\beta$ is given by
	\begin{align}\label{eqn:cai_wu_beta}
		\beta^{\#}=\frac{1}{2}+  \essup_{t\geq 0} \Bigl\{ \gamma(t)-t+ \frac{\min(t,1)}{2} \Bigr\}.
	\end{align}
\end{theorem}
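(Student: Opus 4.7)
The plan is to recover both directions of the threshold from the two main tools of \Cref{sec:alt}: \Cref{theo:full_power} for the range $\beta<\beta^{\#}$ and \Cref{theo:undetect} for the range $\beta>\beta^{\#}$. The simplified version $\widetilde H_n$ in \Cref{rem:simp_Hn} is available here since $\sqrt{\log\log(n)}\,n\varepsilon_n^2=\sqrt{\log\log(n)}\,n^{1-2\beta}\to 0$ when $\beta>1/2$. I would evaluate $\widetilde H_n$ along the natural scale $v=n^{-t}$ with $t\in(0,1)$. The successive changes of variables $u=F_0(x)$ and $u=n^{-s}$ rewrite
\begin{align*}
\mu_n^{F_0}(0,n^{-t}] = \log(n)\int_t^{\infty} e^{h_{n,1}(s)-s\log(n)}\,ds,
\end{align*}
with an analogous identity for $\mu_n^{F_0}(1-n^{-t},1)$ involving $h_{n,2}$. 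Combining this with the uniform hypothesis \eqref{eqn:theo:Cai+Wu_con} and a Laplace-type argument on the resulting integral gives, uniformly on compact subsets of $t\in[0,1]$,
\begin{align*}
\mu_n^{F_0}(0,n^{-t}]+\mu_n^{F_0}(1-n^{-t},1) = n^{\gamma^{\ast}(t)+o(1)}, \quad \gamma^{\ast}(t):=\essup_{s\geq t}\bigl(\gamma(s)-s\bigr),
\end{align*}
and therefore $\widetilde H_n(n^{-t})=n^{1/2-\beta+t/2+\gamma^{\ast}(t)+o(1)}$.

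For the detectable direction $\beta<\beta^{\#}$, the definition of the essential supremum in \eqref{eqn:cai_wu_beta} yields some $t^{\ast}\geq 0$ at which $\gamma(t^{\ast})-t^{\ast}+\min(t^{\ast},1)/2$ exceeds $\beta-1/2$. Setting $v_n=n^{-\min(t^{\ast},\,1-\eta)}$ for small fixed $\eta>0$ makes the exponent in $\widetilde H_n(v_n)$ strictly positive; the resulting polynomial blow-up dominates $\log\log(n)$, and \Cref{theo:full_power} together with \Cref{rem:simp_Hn} then delivers complete detection by \emph{every} $S_n(s)$, $s\in\R$, which is strictly stronger than the likelihood-ratio conclusion of \cite{Cai_Wu_2014}.

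For the undetectable direction $\beta>\beta^{\#}$, fix $\delta>0$ with $\essup_{s\geq 0}(\gamma(s)-s+\min(s,1)/2)\leq\beta-1/2-\delta$. Whenever $t\in[0,1]$ and $s\geq t$ one has $t/2\leq\min(s,1)/2$, hence $1/2-\beta+t/2+\gamma^{\ast}(t)\leq -\delta$ uniformly. Picking any $\kappa\in(0,1)$ and $c_{1,n}=n^{-1+\eta_n}$, $c_{2,n}=n^{-\kappa}$, $c_{3,n}=n^{-\kappa+\eta_n}$, $c_{4,n}=n^{-\eta_n}$ with $\eta_n\to 0$ slowly realises \eqref{eqn:undetect_constants} and forces $\widetilde H_n(v)\leq n^{-\delta/2}$ uniformly on $C_n=[c_{1,n},c_{2,n}]\cup[c_{3,n},c_{4,n}]$, so \eqref{eqn:undetect_Hn} holds. \Cref{theo:undetect} then rules out every test of the form $\mathbf{1}\{S_n(s)>c_n\}$. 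The sharper undetectability-by-\emph{any}-test claim needs mutual contiguity of $P_0^n$ and $Q_n^n$, which I would verify separately via a truncated-$\chi^2$ bound on $\int(\mathrm d\mu_n/\mathrm dP_0)^2\,\mathrm dP_0$ restricted to a region on which the likelihood ratio stays bounded.

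The main obstacle will be making the Laplace step rigorous under the sole assumption that $\gamma$ is measurable: the essential supremum must be turned into a genuine upper estimate on $\log(n)\int_t^{\infty}e^{h_{n,1}(s)-s\log(n)}\,ds$, uniform in $t$ on compact subsets of $[0,1]$, without any continuity or regularity of $\gamma$. Once that uniform estimate is available, matching the constants to \eqref{eqn:undetect_constants} and executing the contiguity computation for the complete Cai--Wu statement are the remaining tasks.
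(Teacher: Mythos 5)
This statement is not proved in the paper: it is imported verbatim as Theorem~3 of Cai and Wu~\cite{Cai_Wu_2014}, and the paper then strengthens it in \Cref{theo:cai_wu_ext} (weaker hypotheses, and optimality of the whole family $\{S_n(s):s\in\R\}$ rather than of the likelihood-ratio test alone). Your proposal is, in effect, a sketch of that strengthened statement, from which \Cref{theo:Cai+Wu} follows as a corollary; the detectable direction lines up closely with the paper's proof of \Cref{theo:cai_wu_ext}: pass to $p$-values, substitute $u=n^{-t}$, rewrite $\widetilde H_n(n^{-t})$ as $n^{1/2-\beta+t/2}\log(n)\int n^{h_n(s)/\log n - s}\,\mathrm ds$, and pick the exponent near the essential supremum, then invoke \Cref{theo:full_power} together with \Cref{rem:simp_Hn}. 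One slip: the essential supremum does not ``yield some $t^\ast$''; it yields, for each $\delta>0$, a Lebesgue-positive set on which the argument exceeds $\beta^\#-1/2-\delta$. You need positive measure, not a single point, since you integrate over it; the paper is careful about this, splitting into the two cases \eqref{eqn:HC_caiwu_def_B1} versus \eqref{eqn:HC_caiwu_def_B2n}.

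For the undetectable direction your labels are crossed. Mutual contiguity (the hypothesis of \Cref{theo:undetect}) is strictly weaker than the conclusion Cai--Wu assert, namely that \emph{no} test works, which is $||P_0^n-Q_n^n||\to 0$. The truncated-$\chi^2$ bound you mention at the end is the right tool and gives the stronger conclusion; the paper reaches it via Lemma~3.12 of \citet{DitzhausJanssen2017}, whose two sufficient conditions are precisely the pair of vanishing integrals in display \eqref{eqn:cai_wu_ext_cond_mu(...} of the proof of \Cref{theo:cai_wu_ext}. Once that bound is in hand, \Cref{theo:undetect} is superfluous for proving \Cref{theo:Cai+Wu}: you already know that no test succeeds, so in particular no $S_n(s)$-based one does. (\Cref{theo:undetect} matters for the separate, finer claim about trivial power of $S_n(s)$ exactly \emph{on} the boundary.) Finally, the obstacle you flag yourself --- making the Laplace step rigorous for merely measurable $\gamma$ --- is sidestepped in the paper by never identifying the exponent as $\gamma^\ast(t)$; the proof instead splits the integral into $I_{n,1},I_{n,2},I_{n,3}$ and bounds each piece directly from \eqref{eqn:cor:cai_wu_ext_Leb_to_0} and \eqref{eqn:cor:cai_wu_ext_techn_cond_lebes}, together with the mass bound $\int \exp(h_n(t))\,n^{-t}\,\mathrm dt\leq 2$. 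You will likely find that cleaner than chasing a uniform Laplace estimate.
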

Theorem \ref{theo:Cai+Wu} can be applied to derive the detection boundary for the general Gaussian location mixture model and the exponential-$\chi^2$ mixture model as explained by \citet{Cai_Wu_2014}. Note that \citet{DonohoJin2004} already discussed these models and postulated the optimality of the higher criticism test for them. But it was not known if this optimality holds in general under the assumptions of \Cref{theo:Cai+Wu}. According to the next theorem it does, even for the whole family of test statistics $\{S_n(s):s\in\R\}$. 
\begin{theorem}[Extension of \Cref{theo:Cai+Wu}]\label{theo:cai_wu_ext} Let $h_n$ be defined as in \Cref{theo:Cai+Wu}. Assume that there exists some $\beta^*\in\R$ such that for every $\delta>0$
	\begin{align}
		\liminf_{n\to\infty}  \lebesgue\left(  t\geq \frac{\log(2)}{\log(n)}: \beta^*-\delta -\frac{1}{2} \leq \frac{h_{n}(t)}{\log(n)} - t + \frac{\min\{t,1\}}{2} \right) > 0 \label{eqn:cor:cai_wu_ext_Leb_>_0} \\
		\text{ and } \lebesgue \left(  t\geq \frac{\log(2)}{\log(n)}: \beta^*+\delta -\frac{1}{2} \leq  \frac{h_{n}(t)}{\log(n)} - t + 	\frac{\min\{t,1\}}{2} \right) =0 \label{eqn:cor:cai_wu_ext_Leb_to_0} 
	\end{align}
	for all sufficiently large $n\geq N_{1,\delta}$. Let $(\lambda_n)_{n\in\N}$ be a sequence in $(0,\infty)$ such that $\lambda_n\to 0$ and $\lambda_n n^{\kappa}\to \infty$ for all $\kappa>0$. Suppose that for some $M\geq 1$:
	\begin{align}
		& \lim_{n\to\infty}    \sup_{t \geq M }  \left|  \frac{h_{n}(t)}{\log(n)} - \gamma(t) \right| =0 \label{eqn:cor:cai_wu_ext_techn_cond_uniform}\\
		\intertext{for some $\gamma:(0,\infty)\to\R$ \underline{or} for every $\delta>0$ there exists $N_{2,\delta}\in\N$ such that }
		& \lebesgue \left(  t\geq M: \beta^*+\delta -1 \leq  \frac{h_{n}(t)}{\log(n)} - \left( 1- \frac{\lambda_n}{\log(n)} \right) t \right)  = 0 \label{eqn:cor:cai_wu_ext_techn_cond_lebes} 	
	\end{align}
	for all $n\geq N_{2,\delta}$. Then $\beta^{\#}=\beta^*$. Moreover, if $\beta<\beta^{\#}$ then for every $s\in\R$ there is a sequence $(c_n(s))_{n\in\N}$ of critical values such that $\varphi_n=\mathbf{1}\{S_n(s)>c_n(s)\}$ can completely separate $\mathcal H_{0,n}$ and $\mathcal H_{1,n}$ asymptotically. 
\end{theorem}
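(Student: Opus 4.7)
The plan is to handle the two assertions of \Cref{theo:cai_wu_ext} separately. The second (complete detection by $\varphi_n(s)$ for every $s\in\R$ whenever $\beta<\beta^*$) is the new content, and once it is established for all $\beta<\beta^*$ the bound $\beta^\#\ge \beta^*$ is automatic. For the reverse inequality $\beta^\#\le \beta^*$ I would follow the template of \citet{Cai_Wu_2014}'s Theorem~3: the upper bound \eqref{eqn:cor:cai_wu_ext_Leb_to_0} together with one of the large-$t$ hypotheses \eqref{eqn:cor:cai_wu_ext_techn_cond_uniform} or \eqref{eqn:cor:cai_wu_ext_techn_cond_lebes} is engineered to yield a second-moment bound on the likelihood ratio giving mutual contiguity of $P_0^n$ and $Q_n^n$, from which no test can distinguish the hypotheses asymptotically.

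For the detection part, \Cref{theo:full_power} reduces the problem to producing $(v_n)\subset(0,1/2)$ with $nv_n\to\infty$ and $(\log\log n)^{-1}\widetilde H_n(v_n)\to\infty$; the simplification $H_n\leadsto \widetilde H_n$ from \Cref{rem:simp_Hn} is available because $\sqrt{\log\log n}\,n\varepsilon_n^2=n^{1-2\beta}\sqrt{\log\log n}\to 0$ for $\beta>1/2$. Fix $\delta\in(0,\beta^*-\beta)$ and set
\begin{align*}
A_n=\bigl\{t\ge \log(2)/\log(n):\;h_n(t)/\log(n)-t+\min(t,1)/2\ge \beta^*-\delta-1/2\bigr\},
\end{align*}
so $\liminf_n\lebesgue(A_n)>0$ by \eqref{eqn:cor:cai_wu_ext_Leb_>_0}. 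Split $A_n=A_n^{(1)}\cup A_n^{(2)}$ depending on whether $h_{n,1}$ or $h_{n,2}$ realises $h_n$; WLOG $\liminf_n \lebesgue(A_n^{(1)})\ge c>0$ (the other case mirrors this one via the right-tail summand of $\widetilde H_n$).

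A pigeonhole argument on $A_n^{(1)}$ then produces $t_n$ and $\eta_n\downarrow 0$ (slowly) with $\lebesgue\bigl(A_n^{(1)}\cap[t_n,t_n+\eta_n]\bigr)\ge \eta_n/2$, and I would insist on $t_n\le 1-\tau$ for a fixed $\tau>0$ (or, in the marginal case, $t_n=1-a_n/\log(n)$ with $a_n\to\infty$) so that $v_n:=n^{-t_n}$ satisfies $nv_n\to\infty$. The change of variables $u=n^{-s}$ gives
\begin{align*}
\mu_n^{F_0}(0,v_n]=\log(n)\int_{t_n}^{\infty} n^{h_{n,1}(s)/\log(n)-s}\,ds\;\ge\;\log(n)\int_{A_n^{(1)}\cap[t_n,t_n+\eta_n]} n^{h_{n,1}(s)/\log(n)-s}\,ds,
\end{align*}
and on the integration range the defining inequality of $A_n^{(1)}$ combined with the elementary bound $\min(s,1)\le \min(t_n,1)+\eta_n$ forces $n^{h_{n,1}(s)/\log(n)-s}\ge n^{\beta^*-\delta-1/2-\min(t_n,1)/2-\eta_n/2}$. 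Multiplying by $v_n^{-1/2}=n^{t_n/2}$ and using $t_n/2-\min(t_n,1)/2=0$ for $t_n\le 1$, this yields
\begin{align*}
\widetilde H_n(v_n)\;\ge\;\tfrac12\,\eta_n\,\log(n)\,n^{\beta^*-\beta-\delta-\eta_n/2},
\end{align*}
which grows polynomially in $n$ and in particular dominates any power of $\log\log(n)$. \Cref{theo:full_power} then delivers $nS_n(s)-r_n\to\infty$ in $Q_n^n$-probability for every $s\in\R$, and choosing $c_n(s)$ between $r_n+1$ and any divergent subsequence yields the required complete separation.

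The main obstacle is the behaviour of $t_n$ near $t=1$, or when the bulk of $A_n^{(1)}$ drifts to $+\infty$: without control one cannot simultaneously retain $nv_n\to\infty$ and capture a positive-measure fragment of $A_n^{(1)}$. The hypotheses \eqref{eqn:cor:cai_wu_ext_techn_cond_uniform}/\eqref{eqn:cor:cai_wu_ext_techn_cond_lebes} are tailored exactly to this: the former forces $A_n^{(1)}$ into a bounded range beyond a finite $M$, while the latter gives, up to null sets, $A_n^{(1)}\cap[M,\infty)\subseteq[M,2\delta\log(n)/\lambda_n]$, still allowing $t_n=1-a_n/\log(n)$ with $a_n\to\infty$ slowly enough to cover a positive fraction of $A_n^{(1)}$ while preserving $nv_n=e^{a_n}\to\infty$. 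Executing this case analysis carefully is the delicate part of the argument; the remaining steps (change of variables, pointwise density bound, application of \Cref{theo:full_power}) are routine.
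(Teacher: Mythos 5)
Your skeleton for the detection part follows the paper closely: reduce via \Cref{theo:full_power} and \Cref{rem:simp_Hn} to showing $(\log\log n)^{-1}\widetilde H_n(v_n)\to\infty$, substitute $u=n^{-t}$, and extract a positive-measure set $A_n$ from \eqref{eqn:cor:cai_wu_ext_Leb_>_0}. However, the point you correctly identify as ``delicate'' --- what to do when the mass of $A_n$ lies beyond $t=1$ --- you then resolve incorrectly. You propose to invoke the technical conditions \eqref{eqn:cor:cai_wu_ext_techn_cond_uniform}/\eqref{eqn:cor:cai_wu_ext_techn_cond_lebes} to confine $A_n$ to a range reachable with $t_n=1-a_n/\log n$, but these conditions do not localize $A_n$: condition \eqref{eqn:cor:cai_wu_ext_techn_cond_lebes} only confines $A_n\cap[M,\infty)$ to $[M,\,2\delta\log(n)/\lambda_n]$, a growing interval, and under \eqref{eqn:cor:cai_wu_ext_techn_cond_uniform} the limit set $\{t\geq M:\gamma(t)-t\geq\beta^*-\delta-1\}$ need not be bounded at all. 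Moreover, a tiny interval $[t_n,t_n+\eta_n]$ with $t_n\approx 1$ simply cannot reach mass of $A_n$ sitting at, say, $t=10$. The paper's resolution is different and simpler: for $t>1$ the defining inequality of $A_n$ reads $h_n(t)-t\log n\geq(\beta^*-\delta-1)\log n$, a \emph{uniform} lower bound on the integrand, so no localisation is needed at all --- one simply takes $\tau_n=1-\widetilde\tau_n$ with $\widetilde\tau_n=\log\log n/\log n$, integrates over the whole of $A_n\cap(1,\infty)$, and the $v_n^{-1/2}=n^{\tau_n/2}$ prefactor exactly cancels the $n^{-1/2}$ from the integrand, up to the innocuous factor $n^{-\widetilde\tau_n/2}=(\log n)^{-1/2}$. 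The pigeonhole is needed only on the bounded interval $(0,1)$ where $\min(t,1)/2$ still depends on $t$.

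There is a second, related misattribution: the technical conditions \eqref{eqn:cor:cai_wu_ext_techn_cond_uniform}/\eqref{eqn:cor:cai_wu_ext_techn_cond_lebes} play no role in the detection half at all; in the paper they are used exclusively in the non-detection half ($\beta>\beta^*$), to show the tail integral $I_{n,3}=\log(n)\,n^{1-\beta}\int_M^\infty\exp(h_n(t)-t\log n)\,\mathrm{d}t\to 0$. Condition \eqref{eqn:cor:cai_wu_ext_Leb_to_0} alone bounds the integrals over $(\log 2/\log n,1)$ and $(1,M)$, but for the unbounded tail one needs either the uniform limit $\gamma$ (to split $h_n=(1-\tau_n)h_n+\tau_n h_n$ and show $\gamma(t)-t\leq 0$ a.e.) or the extra exponential decay $\exp(-\lambda_n t)$ supplied by \eqref{eqn:cor:cai_wu_ext_techn_cond_lebes}. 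You gesture at a second-moment argument for this half, which is the right instinct (the paper routes it through Lemma 3.12 of \citet{DitzhausJanssen2017}, effectively a first/second moment criterion for weak convergence to the uninformative experiment), but without fleshing out the $I_{n,1}+I_{n,2}+I_{n,3}$ decomposition and the role of the technical hypotheses there, the non-detection side is not established. So: the overall strategy is the paper's, but the key step in the detection half (handling $t>1$) is resolved wrongly, and the non-detection half remains a sketch in which the actual purpose of the hypotheses has not been identified.
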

The conditions \eqref{eqn:cor:cai_wu_ext_Leb_>_0} and \eqref{eqn:cor:cai_wu_ext_Leb_to_0} together are mimicking the essential supremum in \eqref{eqn:cai_wu_beta}, where $\gamma$ is replaced by $h_n/\log(n)$. The advantage is that we do not need the uniform convergence as in  \eqref{eqn:theo:Cai+Wu_con}. During our study we had a look at a simple scale exponential distribution model, for which we expected that \Cref{theo:Cai+Wu} can be applied. But for this model $h_n(t)/\log(n)$ tends to $-\infty$ for small $t>0$ and so \eqref{eqn:theo:Cai+Wu_con} is violated. Our (new) more general assumptions can handle this problem. Furthermore, we can treat the following exponential families including a scale exponential, a scale Fr\'echet and a location Gumbel distribution model. Before we can formulate the theorem let us recall that $L$ is called \textit{slowly varying} at infinity if $\lim_{x\to\infty}L(\lambda x)/L(x)= 1$ for $\lambda>0$.

\begin{theorem}\label{theo:cai_wu_expfam}
	Let $(P_{(\vartheta)})_{\vartheta\in[0,\infty)}$ be a family of continuous distributions on $[0,\infty)$ with $P_{(\vartheta)}\ll P_{(0)}$ and Radon-Nikodym density
	\begin{align}\label{eqn:expon_fam_density}
		\frac{\mathrm{ d } P_{(\vartheta)}}{\,\mathrm{ d }P_{(0)}} = C({\vartheta}) \exp \left( {\vartheta}{T} \right)
	\end{align}
	for appropriate functions ${T}:[0,\infty)\to \R$ and $C:[0,\infty)\to (0,\infty)$ with $C(0)=1$. Suppose that $T$ is strictly decreasing on $[0,\eta]$ for some $\eta>0$, $T(\eta)\geq T(x)$ for all $x\geq \eta$ and 
	\begin{align*}
		T(F_0^{-1}(0))-T( F_0^{-1})(u)=u^{\frac{1}{p}}L\Bigl( \frac{1}{u} \Bigr) \text{ as }u\searrow 0
	\end{align*}
	for a slowly varying function $L$ at infinity, where $F_0^{-1}$ is the left continuous quantile function of $P_{(0)}$. Let $P_0=P_{(0)}$ be the noise distribution and $\mu_n=P_{(\vartheta_n)}$ the signal distribution for $\vartheta_n=n^{r}$ with $r>0$.
	Then the conditions of \Cref{theo:cai_wu_ext} are fulfilled for
	\begin{align*}
		\beta^\#=\beta^\#(r,p)= \frac{\min\{rp,1\}+1}{2}.
	\end{align*}
\end{theorem}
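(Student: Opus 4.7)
The strategy is to verify the hypotheses of \Cref{theo:cai_wu_ext} with the candidate value $\beta^* = (1+\min\{rp,1\})/2$. Substituting the density \eqref{eqn:expon_fam_density} into the definition of $h_{n,1}$ and invoking the regularly varying expansion $T(0)-T(F_0^{-1}(u)) = u^{1/p}L(1/u)$ yields, for $t \geq \log(2)/\log(n)$,
\begin{align*}
 h_{n,1}(t) = \log C(\vartheta_n) + \vartheta_n T(0) - \vartheta_n\, n^{-t/p}L(n^t).
\end{align*}
Everything then reduces to understanding the scalar $A_n := (\log C(\vartheta_n) + \vartheta_n T(0))/\log n$ together with the phase transition of the correction $n^{r-t/p}L(n^t)/\log n$.

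\paragraph{Step 1: Laplace asymptotics for $C(\vartheta_n)$.}
The first task is to show $A_n \to rp$. Writing $C(\vartheta_n)^{-1} = \int_0^1 e^{\vartheta_n T(F_0^{-1}(u))}\,du$ and splitting at some small $u_0$, the contribution from $[u_0, 1]$ is bounded by $e^{\vartheta_n T^*}$ for some $T^* < T(0)$, since $T$ strictly decreases on $[0,\eta]$ and satisfies $T\leq T(\eta)$ on $[\eta,\infty)$; this tail contribution is exponentially negligible. For the bulk on $[0,u_0]$ I invert the regularly varying function $u\mapsto u^{1/p}L(1/u)$ and apply a Karamata/Tauberian argument to obtain
\[
 \int_0^{u_0} \exp\bigl(-\vartheta_n u^{1/p}L(1/u)\bigr)\,du \sim p\,\Gamma(p)\,\vartheta_n^{-p} M(\vartheta_n)
\]
for a slowly varying function $M$ built from $L$. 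Taking logarithms, dividing by $\log n$ and using $\vartheta_n = n^r$ together with $\log M(n^r)/\log n\to 0$ gives $A_n \to rp$.

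\paragraph{Step 2: Reduction to $h_{n,1}$ and verification of the conditions.}
The same bound $T\leq T^*$ on $[F_0^{-1}(1/2),\infty)$ (a set containing $F_0^{-1}(1-n^{-t})$ whenever $t\geq \log(2)/\log(n)$) forces $h_{n,2}(t)/\log n \leq A_n - n^r(T(0)-T^*)/\log n \to -\infty$ uniformly in $t$, so $h_n = h_{n,1}$ for all large $n$. Set $G_n(t) := A_n - n^{r-t/p}L(n^t)/\log n - t + \min(t,1)/2$; the limiting profile $t\mapsto rp - t + \min(t,1)/2$ is strictly decreasing and equals $\min(rp,1)/2$ at $t = rp$. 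For any $\delta>0$ I pick $\delta'\in(0,\delta/C)$ and partition $[\log(2)/\log(n),\infty)$ into $[\log(2)/\log(n), rp-\delta']$, $(rp-\delta',rp+\delta')$, and $[rp+\delta',\infty)$. On the first piece the Potter bound $L(y)\leq C_\epsilon y^\epsilon$ makes $n^{r-t/p}L(n^t)\to\infty$ polynomially and $G_n\to -\infty$ uniformly; on the third the universal bound $h_{n,1}/\log n \leq A_n$ yields $G_n(t)\leq \min(rp,1)/2 - \delta'/2 + o(1)$; the middle piece has length $2\delta'$ and on it $G_n$ is controlled by the same universal bound up to $O(\delta')$. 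This verifies \eqref{eqn:cor:cai_wu_ext_Leb_to_0}. Conversely, on a fixed subinterval of $(rp,rp+\delta')$ the uniform convergence $G_n(t) \to rp - t + \min(t,1)/2$ implies $G_n \geq \min(rp,1)/2-\delta$ on a set of positive Lebesgue measure, giving \eqref{eqn:cor:cai_wu_ext_Leb_>_0}. The technical assumption \eqref{eqn:cor:cai_wu_ext_techn_cond_uniform} then holds with $\gamma(t)\equiv rp$ on $[M,\infty)$ for any $M>rp$, once more by the Potter bound.

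\paragraph{Main obstacle.}
The only genuinely analytical step is Step~1: any loss of precision in the Laplace asymptotic for $C(\vartheta_n)$ propagates directly into $A_n$, and one truly needs to extract the polynomial order $\vartheta_n^{-p}$ (not merely boundedness) for $A_n$ to converge to $rp$. Once the Karamata/Tauberian calculation is in place, the remainder of the argument is bookkeeping driven by the sharp transition of $h_n(t)/\log n$ from $-\infty$ on $\{t<rp\}$ to $rp$ on $\{t>rp\}$, which localises the essential supremum in \eqref{eqn:cai_wu_beta} at $t \searrow rp$ with value $\min(rp,1)/2 = \beta^*-1/2$.
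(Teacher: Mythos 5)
Your proposal is correct and follows essentially the same plan as the paper: normalize so that $P_0=\lebesgue_{|(0,1)}$, extract the polynomial rate $C(\vartheta_n)=n^{rp(1+o(1))}$ via a Karamata/Tauberian argument (the paper cites Feller's Tauberian theorems XIII.5.2--3, which is exactly your Laplace-method asymptotic), observe that the term coming from the right tail is exponentially negligible, and then partition the $t$-axis around $t=rp$ to check \eqref{eqn:cor:cai_wu_ext_Leb_>_0} and \eqref{eqn:cor:cai_wu_ext_Leb_to_0} using the universal bound $h_{n,1}(t)\leq\log C(\vartheta_n)+\vartheta_n T(0)$ above $rp$ and the sharp blow-up $-\vartheta_n n^{-t/p}L(n^t)$ below $rp$. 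The only genuine divergence is that you discharge the third hypothesis of \Cref{theo:cai_wu_ext} through the uniform-convergence alternative \eqref{eqn:cor:cai_wu_ext_techn_cond_uniform} with $\gamma\equiv rp$ and a Potter bound, whereas the paper instead verifies the Lebesgue-measure alternative \eqref{eqn:cor:cai_wu_ext_techn_cond_lebes} with $\lambda_n=\log\log n$; both are valid routes under the "either/or" in that theorem, and yours is arguably the more transparent of the two once one has already argued (as you do) that $h_n=h_{n,1}$ eventually and $h_{n,1}/\log n\to rp$ uniformly on $[M,\infty)$.
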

Due to lack of space we do not discuss the asymptotic behavior of the tests on the detection boundary, i.e. $\beta=\beta^\#$. We refer the reader to Corollary 5.7 and Theorem 8.19 of \citet{Ditzhaus2017}. In short, the likelihood ratio test has nontrivial power on the detection boundary, whereas the higher criticism test has no power. By \Cref{theo:undetect} the latter can be extended to the whole family $\{S_n(s):s\in\R\}$. \\
At the end of this section we present the extension of \Cref{theo:Cai+Wu_norm}. This can be applied, e.g., to verify the optimality of the whole family $\{S_n(s):s\in\R\}$ for the heteroscedastic $(\sigma_0^2\neq 1)$ and heterogeneous ($\sigma_0^2=1$) normal location model with $\mu_n=N(\vartheta_n,\sigma_0^2)$. The behavior of the likelihood ratio test and the higher criticism test on the detection boundary for this particular model is discussed in \citet{DitzhausJanssen2017} and can be extended, using \Cref{theo:full_power,theo:undetect}, to the whole family $\{S_n(s):s\in\R\}$. Again, the likelihood ratio test has nontrivial power, whereas tests of the form $\varphi_n=\mathbf{1}\{S_n(s)>c_n(s)\}$ for some $s\in\R$ have no asymptotic power.
\begin{theorem}[Extension of  \Cref{theo:Cai+Wu_norm}]\label{theo:cai_wu_ext_norm} Let $\widetilde h_n$ be defined as in \Cref{theo:Cai+Wu_norm}. Suppose that there is some $\beta^*\in \R$ such that for every $\delta>0$
	\begin{align}
		\liminf_{n\to\infty}  \lebesgue\Bigl(  x\in\R: \beta^*-\delta -\frac{1}{2} \leq \frac{\widetilde h_{n}(x)}{\log(n)} - x^2 + \frac{\min\{x^2,1\}}{2} \Bigr) > 0 \label{eqn:cor:cai_wu_norm_Leb_>_0} \\
		\text{ and } \lebesgue \Bigl(  x\in\R: \beta^*+\delta -\frac{1}{2} \leq  \frac{\widetilde h_{n}(x)}{\log(n)} - x^2 + \frac{\min\{x^2,1\}}{2} \Bigr) =0 \label{eqn:cor:cai_wu_norm_Leb_to_0} 
	\end{align}
	for all sufficiently large $n\geq N_{1,\delta}$. Let $(\lambda_n)_{n\in\N}$ be a sequence in $(0,\infty)$ such that $\lambda_n\to 0$ and $\lambda_nn^{\kappa}\to \infty$ for all $\kappa>0$. Suppose that for some $M\geq 1$:
	\begin{align}
		& \lim_{n\to\infty}    \sup_{|x| \geq M }  \Bigl|  \frac{\widetilde h_{n}(x)}{\log(n)} - \alpha(x) \Bigr| =0 \label{eqn:cor:cai_wu_norm_techn_cond_uniform}\\
		\intertext{for some $\alpha:(0,\infty)\to\R$ \underline{or} for every $\delta>0$ there exists $N_{2,\delta}\in\N$ such that }
		& \lebesgue \Bigl(  |x|\geq M: \beta^*+\delta -1 \leq  \frac{\widetilde h_{n}(x)}{\log(n)} - \left( 1- \frac{\lambda_n}{\log(n)}  \right) x^2 \Bigr)  = 0 \label{eqn:cor:cai_wu_norm_techn_cond_lebes} 	
	\end{align}
	for all $n\geq N_{2,\delta}$. Then $\beta^{\#}=\beta^*$. Moreover, if $\beta<\beta^{\#}$ then for every $s\in\R$ there is a sequence $(c_n(s))_{n\in\N}$ of critical values such that $\varphi_n=\mathbf{1}\{S_n(s)>c_n(s)\}$ can completely separate $\mathcal H_{0,n}$ and $\mathcal H_{1,n}$ asymptotically. 
\end{theorem}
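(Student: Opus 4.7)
The plan is to derive \Cref{theo:cai_wu_ext_norm} as a corollary of \Cref{theo:cai_wu_ext} via a change of variables that exploits the Mills-ratio tail expansion of the standard normal. The two sets of hypotheses are structurally identical: they only differ because \Cref{theo:cai_wu_ext_norm} parametrizes via $x\sqrt{2\log n}$ while \Cref{theo:cai_wu_ext} parametrizes via $F_0^{-1}(1\mp n^{-t})$, and for normal noise these two parametrizations are related by $t \leftrightarrow x^2$. The central dictionary is the bijection $x_{n,+}(t):=F_0^{-1}(1-n^{-t})/\sqrt{2\log n}$ (with $-x_{n,+}$ for the left tail, by symmetry of $P_0$), which gives the exact identities $h_{n,2}(t)=\widetilde h_n(x_{n,+}(t))$ and $h_{n,1}(t)=\widetilde h_n(-x_{n,+}(t))$, hence $h_n(t)=\max\{\widetilde h_n(x_{n,+}(t)),\widetilde h_n(-x_{n,+}(t))\}$.

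The first step is to make this bijection asymptotically explicit. Starting from $-\log(1-F_0(y))=y^2/2+\log y+\frac{1}{2}\log(2\pi)+o(1)$ as $y\to\infty$ and solving $1-F_0(y)=n^{-t}$, I would derive
\begin{align*}
    x_{n,+}(t)^2 = t+O\Bigl(\frac{\log\log n}{\log n}\Bigr),\quad x_{n,+}(t)=\sqrt t +o(1),\quad x_{n,+}'(t)=\frac{1}{2\sqrt t}+o(1),
\end{align*}
uniformly on $t\in[\varepsilon,M]$ for every $0<\varepsilon<M<\infty$. These bounds give, uniformly on compact positive $t$-intervals,
\begin{align*}
    \frac{h_n(t)}{\log n}-t+\frac{\min\{t,1\}}{2}=\max_{\sigma\in\{-1,+1\}}\Bigl\{\frac{\widetilde h_n(\sigma\, x_{n,+}(t))}{\log n}-x_{n,+}(t)^2+\frac{\min\{x_{n,+}(t)^2,1\}}{2}\Bigr\}+o(1).
\end{align*}

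With this identity in hand, the second step is the translation of the four hypotheses. Because the Jacobian $x_{n,+}'$ is bounded above and below on compacts of $(0,\infty)$, positive Lebesgue measure on the $x$-side in \eqref{eqn:cor:cai_wu_norm_Leb_>_0} transfers to positive Lebesgue measure on the $t$-side in \eqref{eqn:cor:cai_wu_ext_Leb_>_0}, and Lebesgue-null in \eqref{eqn:cor:cai_wu_norm_Leb_to_0} transfers to Lebesgue-null in \eqref{eqn:cor:cai_wu_ext_Leb_to_0}; the $o(1)$ error in the display above is absorbed by tightening the tolerance from $\delta$ to $\delta/2$, which is legitimate because $\delta>0$ is arbitrary. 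For the technical alternatives, \eqref{eqn:cor:cai_wu_norm_techn_cond_uniform} with function $\alpha$ yields \eqref{eqn:cor:cai_wu_ext_techn_cond_uniform} with $\gamma(t):=\max\{\alpha(\sqrt t),\alpha(-\sqrt t)\}$, while \eqref{eqn:cor:cai_wu_norm_techn_cond_lebes} gives \eqref{eqn:cor:cai_wu_ext_techn_cond_lebes} after relabeling $M$ to $M^2$ and using $\lambda_n/\log n\to 0$ to control the perturbation $1-\lambda_n/\log n$. Invoking \Cref{theo:cai_wu_ext} then delivers $\beta^\#=\beta^*$ and the critical sequences $(c_n(s))_{n\in\N}$ for every $s\in\R$.

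The main obstacle will be ensuring that the two boundary regimes $t\downarrow\log(2)/\log(n)$ and $t\to\infty$ do not spoil the translation. Near $t=\log(2)/\log(n)$ one has $x_{n,+}(t)\to 0$ and the error control in the Mills expansion degrades, but this tail carries Lebesgue measure $O(1/\log n)$ and thus does not affect \eqref{eqn:cor:cai_wu_ext_Leb_>_0}/\eqref{eqn:cor:cai_wu_ext_Leb_to_0}, which are insensitive to null sets. Large $t$ is already covered by the hypotheses \eqref{eqn:cor:cai_wu_norm_techn_cond_uniform}/\eqref{eqn:cor:cai_wu_norm_techn_cond_lebes} of \Cref{theo:cai_wu_ext_norm}, where $x_{n,+}(t)^2/t\to 1$ still holds (the Mills expansion even strengthens as $y\to\infty$), so the conditions pass through verbatim after the change of variables. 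Making these two boundary regimes entirely watertight is the only technically non-routine step.
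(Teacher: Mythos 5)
Your plan is to deduce \Cref{theo:cai_wu_ext_norm} from \Cref{theo:cai_wu_ext} via the change of variables $t\leftrightarrow x^2$ using the Mills-ratio expansion. This is a genuinely different route from the paper, which does not reduce one theorem to the other: the paper's proof of \Cref{theo:cai_wu_ext_norm} redoes both cases ($\beta<\beta^*$ via \Cref{theo:full_power} and \Cref{rem:simp_Hn}; $\beta>\beta^*$ via Lemma 3.12 of \citet{DitzhausJanssen2017}) from scratch in the $x$-variable, using the substitution $x=y\sqrt{2\log n}$ inside the integrals but never invoking the conclusion of \Cref{theo:cai_wu_ext}. Your reduction idea is attractive and the translation of the two Lebesgue-measure hypotheses is plausible — in particular, the observation you would need (and should state) is that $\liminf_n\lebesgue(A_n)\ge\kappa>0$ forces $\lebesgue(A_n\cap\{|x|\ge\kappa/4\})\ge\kappa/2$, so the image under $x\mapsto x^2$ keeps measure bounded below even though the Jacobian $2|x|$ vanishes at the origin; and the sign of the error $x_{n,+}(t)^2\le t$ works in the favourable direction for passing the null-set condition \eqref{eqn:cor:cai_wu_norm_Leb_to_0} to \eqref{eqn:cor:cai_wu_ext_Leb_to_0}.

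The genuine gap is in the translation of the uniform-convergence alternative. You propose $\gamma(t):=\max\{\alpha(\sqrt t),\alpha(-\sqrt t)\}$ and claim that \eqref{eqn:cor:cai_wu_norm_techn_cond_uniform} then yields \eqref{eqn:cor:cai_wu_ext_techn_cond_uniform}. Unwinding this requires
\begin{align*}
\sup_{t\ge M'}\bigl|\,\alpha\bigl(\pm x_{n,+}(t)\bigr)-\alpha\bigl(\pm\sqrt t\,\bigr)\,\bigr|\longrightarrow 0,
\end{align*}
and since $x_{n,+}(t)\ne\sqrt t$ for every finite $n$, this is a statement about the modulus of continuity of $\alpha$. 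But no continuity of $\alpha$ is assumed — neither in \Cref{theo:cai_wu_ext_norm} nor in the original \Cref{theo:Cai+Wu_norm}, which asks only for measurability. If $\alpha$ oscillates on arbitrarily fine scales near some $\sqrt{t_0}\ge M$, then $h_n(t_0)/\log n=\max\{\widetilde h_n(\pm x_{n,+}(t_0))/\log n\}\approx\max\{\alpha(\pm x_{n,+}(t_0))\}$ need not converge to anything, so there is no fixed $\gamma$ for which \eqref{eqn:cor:cai_wu_ext_techn_cond_uniform} holds, and \Cref{theo:cai_wu_ext} cannot be invoked. Trying instead to derive the alternative condition \eqref{eqn:cor:cai_wu_ext_techn_cond_lebes} from \eqref{eqn:cor:cai_wu_norm_techn_cond_uniform} also fails: the bound one obtains is $h_n(t)/\log n-(1-\lambda_n/\log n)t\le\epsilon_n+\lambda_nt/\log n-O(\log(t\log n)/\log n)$ a.e., and the term $\lambda_nt/\log n$ is unbounded as $t\to\infty$, so one cannot force the right-hand side below $\beta^*+\delta-1$ (which may be negative) uniformly in $t\ge M$. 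The paper's direct proof avoids this entirely by working with $\alpha$ itself — it derives $\lebesgue(|x|\ge M:\alpha(x)-x^2>0)=0$ and then bounds $I_{n,3}$ using the split $\widetilde h_n=(1-\tau_n)\widetilde h_n+\tau_n\widetilde h_n$ in the $x$-variable, never needing a $t$-variable limit function. To make your reduction work you would either have to impose continuity on $\alpha$ (weakening the theorem), or handle the \eqref{eqn:cor:cai_wu_norm_techn_cond_uniform} branch by a separate direct argument, which largely defeats the point of the reduction.
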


\subsection{Dense exponential family}
In this section we give a quite general example for a dense signal model and show the optimality of the whole family $\{S_n(s):s\in\R\}$ for it.  The normal location model, $P_0=N(0,1)$ and $\mu_n=N(\vartheta_n,1)$ with $\vartheta_n \to 0$, is included in this consideration. This particular model was already discussed by \citet{CaiJengJin2011} concerning the higher criticism test, i.e. for $s\in\R$. Here, we extend their result to all $s\in\R$ and to exponential families in a far more general form compared to \Cref{theo:cai_wu_ext}. In contrast to the previous section we discuss the asymptotic power behavior on the detection boundary here in detail. 

\begin{figure}[tb] % Mit dem Zusatz [htbp], kann bestimmt werden, dass die Grafik genau "hier" eingefügt wird. Am Seitenanfang finde ich aber wesentlich angenehmer!
	\begin{center}	
		\includegraphics[trim = 22mm 150mm 80mm 10mm, clip, width=0.45\textwidth]{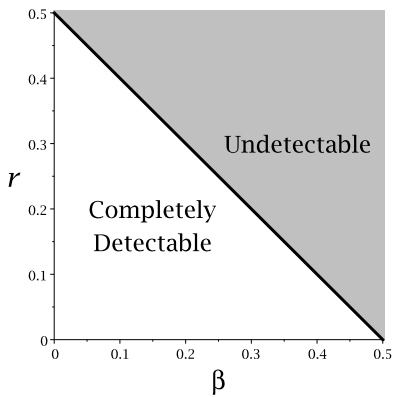}
	\end{center}
	\caption[Detection boundary for dense exponential families]{
		The detection boundary $\beta\mapsto \rho^*(\beta)= 1/2-\beta$ for dense signal exponential family mixtures  is plotted, see \Cref{theo:den_exp}.  } \label{fig:normal+expfam}
\end{figure}

\begin{theorem}\label{theo:den_exp}
	Let $(P_{(\vartheta)})_{\vartheta\in[0,\infty)}$ be a family of continuous distributions on $[0,\infty)$ with Radon-Nikodym density given by \eqref{eqn:expon_fam_density} for ${T}:[0,\infty)\to \R$ and $C:[0,\infty)\to (0,\infty)$ with $C(0)=1$. Assume that $\text{Var}_{P_{(0)}}(T)>0$. Consider the noise distribution $P_0=P_{(0)}$ and the signal distribution $\mu_n=P_{(\vartheta_n)}$ with the parametrization
	\begin{align*}
		\varepsilon_{n}=n^{-\beta},\,\beta\in\Bigl( 0,\frac{1}{2} \Bigr),\textrm{ and }\vartheta_n=n^{-r},\,r>0,
	\end{align*}
	then the detection boundary for the parameter $r$ is given by
	\begin{align*}%\label{eqn:theo:den_exp_RHO}
		\rho^{*}(\beta)=\frac{1}{2}-\beta.			
	\end{align*}
	In particular, we have for all $s\in\R$:
	\begin{enumerate}[(a)]
		\item\label{enu:theo:den_exp_r<} If $r<\rho^{*}(\beta)$ then there is a sequence $(c_n(s))_{n\in\N}$ of critical values such that $\varphi_n=\mathbf{1}\{S_n(s)>c_n(s)\}$ can completely separate $P_0^n$ and $Q_n^n$ asymptotically. 
		
		\item\label{enu:theo:den_exp_r=} Suppose that $r=\rho^{*}(\beta)$. Then  $2N(0,1)((1/2){\text{Var}_{P_{(0)}}(T)}^{\frac{1}{2}},\infty)$ is the sharp lower bound of the limit of the sum of type 1 and 2 error probabilities for all tests testing $P_0^n$ versus $Q_n^n$. But all tests of the form $\varphi_n=\mathbf{1}\{S_n(s)>c_n(s)\}$ cannot distinguish between $P_0^n$ and $Q_n^n$ asymptotically.
		
		\item\label{enu:theo:den_exp_r>} If $r>\rho^{*}(\beta)$ then no test $\varphi_n$ can distinguish between $P_0^n$ and $Q_n^n$ asymptotically.
	\end{enumerate}
\end{theorem}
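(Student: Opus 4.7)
The plan is to reduce all three cases to the general tools from Sections \ref{sec:teststat} and \ref{sec:alt} via a Taylor expansion of the exponential-family density around $\vartheta = 0$. Let $\mu_T = E_{P_{(0)}}[T]$, $\sigma^2 = \text{Var}_{P_{(0)}}(T)$, and $\psi(u) = T(F_0^{-1}(u)) - \mu_T$; since the MGF of $T$ is finite in a neighborhood of $0$, the function $\psi$ has finite moments of every order under Lebesgue measure on $(0, 1)$. Expanding the cumulant generating function at $\vartheta_n \to 0$ gives
\begin{equation*}
\frac{d \mu_n^{F_0}}{d u}(u) = 1 + \vartheta_n \psi(u) + \vartheta_n^2 \, \frac{\psi(u)^2 - \sigma^2}{2} + O(\vartheta_n^3),
\end{equation*}
which upon integration yields $\mu_n^{F_0}(0, v] - v = \vartheta_n \int_0^v \psi(u)\, du + O(\vartheta_n^2)$ and analogously at the right tail, while $\chi^2(\mu_n, P_0) = \vartheta_n^2 \sigma^2 + O(\vartheta_n^3)$.

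For case (c), I only need the chi-squared bound: $n \chi^2(Q_n, P_0) = n \varepsilon_n^2 \chi^2(\mu_n, P_0) = n^{1 - 2\beta - 2r} \sigma^2 (1 + o(1)) \to 0$, so factorization of the Hellinger affinity forces $\| P_0^n - Q_n^n \|_{\mathrm{TV}} \to 0$ and no test can distinguish. For case (a), $\sqrt{n}\, \varepsilon_n \vartheta_n = n^{1/2 - \beta - r} \to \infty$, and because $\sigma^2 > 0$ the function $\psi$ is not a.e.\ zero, so I can fix some $v^* \in (0, 1/2)$ with $\int_0^{v^*} \psi(u)\, du \neq 0$ or $\int_{1 - v^*}^{1} \psi(u)\, du \neq 0$. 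Then $H_n(v^*)$ diverges at rate $n^{1/2 - \beta - r}$, in particular much faster than $\log\log(n)$, and complete separation follows from \Cref{theo:full_power}.

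The delicate case is (b), where $\sqrt{n}\, \varepsilon_n \vartheta_n = 1$. For the sharp lower bound I perform a LAN-type analysis: writing $L_n = \sum_{i = 1}^{n} \log (d Q_n / d P_0)(X_{n, i})$ and applying $\log(1 + y) = y - y^2/2 + o(y^2)$ to each summand together with the expansion above, one obtains $L_n = n^{-1/2} \sum_{i = 1}^{n} \psi(F_0(X_{n, i})) - \sigma^2/2 + o_{P_0^n}(1)$. The classical CLT yields $L_n \overset{\mathrm d}{\longrightarrow} N(- \sigma^2/2, \sigma^2)$ under $P_0^n$, and by Le Cam's third lemma $L_n \overset{\mathrm d}{\longrightarrow} N(\sigma^2/2, \sigma^2)$ under $Q_n^n$. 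The Neyman-Pearson test $\mathbf{1}\{ L_n > 0 \}$ therefore attains the combined error $2 \Phi(- \sigma / 2) = 2 N(0, 1)((\sigma / 2, \infty))$, which is sharp; both limits being real-valued also yields mutual contiguity of $P_0^n$ and $Q_n^n$ by Le Cam's first lemma.

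For the no-power part of (b) I apply \Cref{theo:undetect} with $c_{1, n} = 1/n$, $c_{2, n} = c_{3, n} = n^{-1/2}$ and $c_{4, n} = (\log\log(n))^{-3}$, all satisfying \eqref{eqn:undetect_constants} with $\kappa = 1/2$. By H\"older's inequality with exponent $p = 4$, valid because $\psi \in L^4$,
\begin{equation*}
v^{-1/2} \Bigl| \int_0^v \psi(u)\, du \Bigr| \leq v^{1/4} \| \psi \|_4,
\end{equation*}
and analogously for the right-tail integral, while the second-order remainder in the density expansion contributes only an $O(\vartheta_n)$ term after multiplication by $\sqrt{n}\, \varepsilon_n$. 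Hence $H_n(v) \leq 2 \| \psi \|_4 v^{1/4} + O(\vartheta_n)$ uniformly in $v \in C_n$, so $\sqrt{\log\log(n)} \sup_{v \in C_n} H_n(v) = O(\sqrt{\log\log(n)} \cdot (\log\log(n))^{-3/4}) + o(1) \to 0$, verifying \eqref{eqn:undetect_Hn}. \Cref{theo:undetect} then delivers the same Gumbel limit for $n S_n(s) - r_n$ under both $P_0^n$ and $Q_n^n$, whence the sum of type~1 and type~2 errors of any test $\varphi_n(s) = \mathbf{1}\{ S_n(s) > c_n(s) \}$ tends to $1$. The main obstacle is precisely this last step: the naive Cauchy-Schwarz bound ($p = 2$) only yields $H_n = O(1)$, not enough to beat $\sqrt{\log\log(n)}$; it is the exponential-family structure (finite MGF, hence $\psi \in L^p$ for every $p < \infty$) that supplies the extra $v^{1/2 - 1/p}$ decay needed to close the argument.
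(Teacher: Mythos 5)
Your proposal is correct and its Step 2 (the $S_n(s)$ analysis via H\"older) mirrors the paper almost exactly: the same $v^{3/4}$ bound from the $L^4$-moment of $T-\mu_T$, the same observation that $\mathrm{Var}(T)>0$ forces some partial integral of $\psi$ to be nonzero, and the same application of \Cref{theo:full_power,theo:undetect} with essentially the same cut-off choices (the paper takes $c_{n,4}=(\log n)^{-4}$, you take $(\log\log n)^{-3}$; both satisfy \eqref{eqn:undetect_constants} and both beat $\sqrt{\log\log n}$). Where you genuinely diverge is the likelihood-ratio half. The paper verifies weak convergence of the binary experiments $\{P_0^n,Q_n^n\}$ to the uninformative experiment (for $r>\rho^*$) or the Gaussian shift experiment (for $r=\rho^*$) by checking the moment-type conditions $I_{n,1,x}\to 0$ and $I_{n,2,x}\to\sigma^2\mathbf{1}\{r=\rho^*\}$ from Theorem~3.10 and Lemma~3.12 of Ditzhaus--Janssen, exploiting the Laplace transform identity $C(\vartheta)^2/C(2\vartheta)$. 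You instead attack the log-likelihood directly: a textbook chi-squared/Hellinger factorization bound $n\chi^2(Q_n,P_0)=n^{1-2\beta-2r}\sigma^2(1+o(1))\to 0$ handles case (c), and a LAN expansion $L_n=n^{-1/2}\sum\psi(F_0(X_{n,i}))-\sigma^2/2+o_{P_0^n}(1)$ plus the CLT and Le Cam's third lemma handles the sharp bound in case (b). Both routes are valid and land on the same limit experiment $\{N(-\sigma^2/2,\sigma^2),N(\sigma^2/2,\sigma^2)\}$ and error bound $2\Phi(-\sigma/2)$; your LAN route is the more classical and self-contained, at the cost of having to control the second- and third-order remainders by hand (you rightly note these are tame because the exponential-family structure gives $\psi\in L^p$ for all $p$), whereas the paper's route is shorter given the Ditzhaus--Janssen machinery but relies on citing it as a black box. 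The only presentational slip I would flag: in the no-power bound you write $H_n(v)\leq 2\|\psi\|_4 v^{1/4}+O(\vartheta_n)$, but the remainder really enters as $O(\vartheta_n)\,v^{1/4}$ (both pieces are integrals over a set of measure $v$ and pick up the same $v^{3/4}$ from H\"older); your looser bound still suffices since $\sqrt{\log\log n}\,\vartheta_n\to 0$.
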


\section{Discussion}
	The higher criticism test statistic became quite popular recently. In this paper we showed that a whole class of test statistics shares the same optimal properties of the higher criticism one under different model assumptions. The advantage of a whole class is more flexibility in choosing a test statistic which suits the specific problem best. \citet{JagerWellner2007} already pointed out that $S_n(s)$ is more sensible for signal distributions with heavy or light tails if $s\leq 0$ or $s\geq 1$, respectively. As a good compromise they suggested their "new" $S_n(1/2)$. In future we wish to conduct detailed simulation study in order to understand the differences between the test statistics and to be able to give an advice for practitioners how to choose "the best" $s$. \\
	Beside the detection problem, a more in-depth analysis of the data as feature selection, estimation of the number of signals and classification is of great interest to practice, too. The detection problem discussed in this paper is closely related to the other problems, see \cite{DonohoJin2009,DonohoJin2015,HallPittelkowGhosh2008,Jin2009}, for which the higher criticism statistic can be used, too. The results in this paper suggest that the whole class $\{S_n(s):s\in\R\}$ may be used for these problems. The benefit would again be more flexibility. This could be a project for the future.\\    
	The function $H_n$ serving as our tool for the asymptotic behavior under the alternative was already used by \citet{DitzhausJanssen2017} for the higher criticism test. In particular, the results concerning their examples can be extended immediately to the whole family $\{ S_n(s):s\in\R\}$. Beside the normal location model they also discussed a structure model for $p$-values based on the spike chimeric alternatives of \citet{Khmaladze1998}.
    
\section{Acknowledgments}
	The author thanks the \textit{Deutsche Forschungsgemeinschaft} (DFG) for financial support (DFG Grant no. 618886).
	    
\section{Proofs}\label{sec:proof}
To prove \Cref{theo:null} we use some results of \citet{Chang1955} and \citet{Wellner1978} about the asymptotic behavior of the empirical distribution function. We summarize them in the following lemma.
\begin{lemma}\label{lem:Chang+Wellner}
	Let $X_{n,1},\ldots,X_{n,n}$ be independent and identical distributed random variables on the same probability space $\left(\Omega, \mathcal{A}, {P}\right)$ with continuous distribution function $F_n$. Let $\mathbb{F}_n$ be the corresponding empirical distribution function. Let $(d_n)_{n\in\N}$ be a decreasing sequence in $\R$, i.e. $d_n>d_{n+1}$, such that $nF_n(d_n)\to\infty$. Then
	\begin{align}\label{eqn:lem:Chang+Wellner}
		&\sup_{d_n\leq x< \infty}\Bigl|\frac{\mathbb{F}_n(x)}{F_n(x)}-1\Bigr| \overset{P}{\longrightarrow} \,0. %\;\text{ and }\; \frac{1}{\log_{(2)}(n)} \sup_{ \frac{1}{n}\leq F_n(x) <\infty} \frac{\mathbb{F}_n(x)}{F_n(x)} \overset{\mathrm P}{\longrightarrow} 0.
	\end{align}
	If additionally $c_n= nF(d_n)/\log\log(n)\to\infty$ then
	\begin{align}\label{eqn:lem:Chang+Wellner_sqrt2}
		\sqrt{c_n}\sup_{d_n\leq x <\infty}\Bigl|\frac{\mathbb{F}_n(x)}{F_n(x)}-1\Bigr| \overset{P}{\longrightarrow} \sqrt{2}.
	\end{align}
	Moreover, for all $t\in\R$
	\begin{align}\label{eqn:lem:Wellner}
		\lim_{\lambda\to\infty}\limsup_{n\to\infty}P( B_{n,\lambda,t}^c)  =0 \text{ with }B_{n,\lambda,t}=\Bigl\{\sup_{  x \in(X_{1:n},\infty)}\Bigl(  \frac{ \mathbb{F}_n(x) }{x}  \Bigr)^t< \lambda \Bigr\}
	\end{align}
\end{lemma}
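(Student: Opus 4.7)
The plan is to reduce the three assertions to well-known statements about the uniform empirical process via a probability integral transform, and then invoke the cited results of \citet{Chang1955} and \citet{Wellner1978}.

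For \eqref{eqn:lem:Chang+Wellner} and \eqref{eqn:lem:Chang+Wellner_sqrt2}, first set $U_{n,i}=F_n(X_{n,i})$, which are i.i.d.\ uniform on $(0,1)$ by continuity of $F_n$, and write $\mathbb{G}_n$ for their empirical distribution function. Monotonicity yields $\mathbb{F}_n(x)=\mathbb{G}_n(F_n(x))$, so with $a_n=F_n(d_n)$ both suprema convert into $\sup_{a_n\leq u<1}|\mathbb{G}_n(u)/u-1|$, where $na_n\to\infty$. This is exactly the setting of Chang's law for the uniform empirical ratio. A standard argument splits $[a_n,1)$ into the dyadic blocks $(2^{-k-1},2^{-k}]$ with $2^{-k}\geq a_n$, applies a Bernstein-type exponential inequality on each, and sums the resulting tail bounds. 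For the weak law \eqref{eqn:lem:Chang+Wellner} only $na_n\to\infty$ is needed; for the sharp rate $\sqrt{2/c_n}$ in \eqref{eqn:lem:Chang+Wellner_sqrt2} one additionally uses $c_n\to\infty$ together with the observation that $\log\log(na_n)\sim\log\log(n)$ under that hypothesis, so that the two natural normalisations coincide.

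For \eqref{eqn:lem:Wellner}, the statement is naturally interpreted in the uniform case (where $F_n(x)=x$ on $(0,1)$, so that $\mathbb{F}_n(x)/x$ is the normalized empirical ratio), and the claim reduces to the tightness of $\sup_{u>U_{1:n}}(\mathbb{G}_n(u)/u)^t$ for every $t\in\R$. The key input is the reversed-martingale property of $u\mapsto \mathbb{G}_n(u)/u$ and Daniels' theorem, giving $P(\sup_{0<u\leq 1}\mathbb{G}_n(u)/u\geq\lambda)\leq 1/\lambda$. For $t\geq 0$ this immediately yields $P(B_{n,\lambda,t}^c)\leq \lambda^{-1/t}\to 0$ as $\lambda\to\infty$. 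For $t<0$, one applies the symmetric Daniels-type inequality to $\sup_{u>U_{1:n}}u/\mathbb{G}_n(u)$, as discussed in \citet{Wellner1978}, to obtain the same tightness.

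The place where I would expect the most care is the sharp constant $\sqrt{2}$ in \eqref{eqn:lem:Chang+Wellner_sqrt2}: the upper bound follows from a careful Bernstein estimate, but the matching lower bound requires identifying a specific dyadic block on which the fluctuations actually saturate $\sqrt{2\log\log(n)/(na_n)}$ and then executing a Borel--Cantelli-type argument. Rather than reproduce this delicate calculation, I would invoke \citet{Chang1955} directly, as the paper does; indeed the whole lemma is essentially a collation of results already available in \cite{Chang1955} and \cite{Wellner1978}.
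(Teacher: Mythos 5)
Your proposal matches the paper's proof: the reduction to the uniform case via the probability integral transform $U_{n,i}=F_n(X_{n,i})$, followed by an appeal to Chang (1955) for \eqref{eqn:lem:Chang+Wellner}, Theorem 1S of Wellner (1978) for the sharp constant in \eqref{eqn:lem:Chang+Wellner_sqrt2}, and (for \eqref{eqn:lem:Wellner}) the Daniels-type tightness bounds recorded in Remark~1 of Wellner (1978), covering both $t\geq 0$ and $t<0$ through the two one-sided inequalities. The extra sketch of the dyadic-block/Bernstein argument is a faithful description of what underlies the cited results but is not part of the paper's proof, which (correctly) just cites them.
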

\begin{proof}[Proof of \Cref{lem:Chang+Wellner}]
	First, suppose that $X_{n,1},\ldots,X_{n,n}$ are uniformly distributed on $(0,1)$. Then \eqref{eqn:lem:Chang+Wellner} was stated by \citet{Chang1955}, see also Theorem 0 in \cite{Wellner1978}, and \eqref{eqn:lem:Wellner} follows by combining (i) and (ii) of  Remark 1 of \citet{Wellner1978}. Moreover, \eqref{eqn:lem:Chang+Wellner_sqrt2} follows from Theorem 1S of \citet{Wellner1978}. For general continuous distribution  note that $F_n(X_{n,1}),\ldots,F_n(X_{n,n})$ are independent and uniformly distributed random variables in $(0,1)$. Consequently, it is easy to check that the statements for general distributions can be concluded from the ones for uniform distributions.
\end{proof}

\begin{proof}[Proof of \Cref{theo:null}]
		Having a transformation to $p$-values $p_{n,i}=F_0(X_{n,i})$ or $p_{n,i}=1-F_0(X_{n,i})$ in mind we can assume without loss of generality that $P_0=\lebesgue_{|(0,1)}$ is the uniform distribution on the interval $(0,1)$. The proof  is based, as is the one of Theorem 3.1 of \citet{JagerWellner2007}, on a Taylor expansion of $u\mapsto K_s(u,v)$ around $u=v$. It is easy to verify that
	\begin{align*}
		&\frac{\partial}{\partial u}K_s(u,v)_{|u=v} = 0 = K_s(v,v),\,\quad \frac{\partial^2}{\partial^2 u}K_s(u,v)_{|u=v} = \frac{1}{v(1-v)}\\
		\text{and }&\frac{\partial^3}{\partial^3 u}K_s(u,v) = \frac{(s-2)}{v^2} \Bigl( \frac{u}{v} \Bigr)^{s-3}-\frac{(s-2)}{(1-v)^2} \Bigl( \frac{1-u}{1-v} \Bigr)^{s-3}.
	\end{align*}
	Hence, by a careful calculation we obtain for all $x\in[X_{1:n},X_{n:n})$ that
	\begin{align}
		& K_s(\mathbb{F}_n(x),x)=  K_2(\mathbb{F}_n(x),x) \Bigl(1+ \frac{(s-2)}{3}R_{n,x,s}\Bigr) \text{ with }\label{eqn:Ks()...taylor} \\
		&R_{n,x,s}=\frac{( \mathbb{F}_n(x)-x )}{x(1-x)} \Bigl( (1-x)^2 \Bigl( \frac{\mathbb{F}^*_{n,x}}{x} \Bigr)^{s-3}-x^2 \Bigl( \frac{1-\mathbb{F}_{n,x}^*}{1-x} \Bigr)^{s-3} \Bigr),\nonumber 
	\end{align}
	where $\mathbb{F}_{n,x}^*$ is a random variable satisfying $\min\{\mathbb{F}_n(x),x\}\leq \mathbb{F}_{n,x}^*\leq \max\{\mathbb{F}_n(x),x\}$. Clearly, $(0,\infty)\ni t\to t^{3-s}$ is monotone for all $s\in \R$. That is why we have $|R_{n,x,s}|\leq R_{n,x,s}^{(1)}+R_{n,x,s}^{(2)}$, where
	\begin{align*}
		 &R_{n,x,s}^{(1)}= \frac{|\mathbb{F}_n(x)-x|}{x}  \max\Bigl\{ 1,\Bigl( \frac{\mathbb{F}_n(x)}{x} \Bigr)^{s-3} \Bigr\}\\
		 \text{ and }\;&R_{n,x,s}^{(2)}= \frac{|\mathbb{F}_n(x)-x|}{1-x}  \max\Bigl\{ 1,\Bigl( \frac{1-\mathbb{F}_n(x)}{1-x} \Bigr)^{s-3} \Bigr\}.		 
	\end{align*}
	Let $d_n=n^{-1}(\log n)^5$. Obviously, $P_0^n(X_{1:n}> d_n)+P_0^n(X_{n:n}< 1-d_n)=2(1-d_n)^n\to 0$. Moreover, observe that by \eqref{eqn:Ks()...taylor}
	\begin{align*}
		&\Bigl | nK_s(\mathbb{F}_n(x),x)-r_n -(nK_2(\mathbb{F}_n(x),x)-r_n) \Bigr| \\
		&\leq \Bigl( R_{n,x,s}^{(1)}+R_{n,x,s}^{(2)} \Bigr)\Bigl( \Bigl | nK_2(\mathbb{F}_n(x),x)-r_n  \Bigr|+ r_n \Bigr).
	\end{align*}
	 Consequently, for \eqref{eqn:limit_null} it is sufficient to show that 
	\begin{align}
		& n \Bigl( \sup_{  x \in(d_n,  1- d_n) } K_2(\mathbb{F}_n(x),x) \Bigr)-r_n \overset{\mathrm d}{\longrightarrow} Y\text{ under }P_0^n,\label{eqn:K2}\\
		& r_n\sup_{  x \in(d_n,  1- d_n) } R_{n,x,s}^{(j)} \xrightarrow{P_0^n} 0 \text{ for }j=1,2\,\text{ and }\label{eqn:Rj}\\
		& I_{n,s}= n \Bigl( \sup_{  x \in(X_{1:n},d_n)\cup(1-d_n,X_{n:n}) } K_s(\mathbb{F}_n(x),x) \Bigr) -r_n\xrightarrow{P_0^n} -\infty .\label{eqn:Ks-rn_to_-infty}
	\end{align}
	Note that
	\begin{align*}
		I_{n,s}\leq r_n \Bigl( -1+\frac{n}{r_n}  \sup_{  x \in(X_{1:n},d_n)\cup(1-d_n,X_{n:n}) } K_2(\mathbb{F}_n(x),x)\Bigl( 1+R^{(1)}_{n,x,s} +R^{(2)}_{n,x,s} \Bigr)\Bigr).
	\end{align*}
	Hence, using the inequality
	\begin{align} 	\label{eqn:ineque_Blambda}
		P(A_n) \leq P(A_n\cap B_{n,\lambda})+P(B_{n,\lambda}^c)
	\end{align}
	with appropriate sets we can deduce \eqref{eqn:Ks-rn_to_-infty} from \eqref{eqn:Ks()...taylor} if 
	\begin{align}
		&\frac{n}{r_n}  \sup_{  x \in(X_{1:n},d_n)\cup(1-d_n,X_{n:n}) } K_2(\mathbb{F}_n(x),x) \xrightarrow{P_0^n} 0 \text{ and } \label{eqn:K2/rn_to_0} \\
		&\lim_{\lambda\to\infty}\limsup_{n\to\infty}P_0^n\bigl( (B_{n,\lambda}^{(j)})^c\bigr)=0\text{ for both }j\in\{1,2\},\label{eqn:Rj_lambda}\\
		&\text{ where }\; B_{n,\lambda}^{(j)} =\Bigl\{ \sup_{  x \in(X_{1:n},d_n)\cup(1-d_n,X_{n:n}) } R_{n,x,s}^{(j)} < \lambda  \Bigr\}. \nonumber 
	\end{align}
	Thus, it remains to verify \eqref{eqn:K2}, \eqref{eqn:Rj}, \eqref{eqn:K2/rn_to_0} and \eqref{eqn:Rj_lambda}. Note that by symmetry it is sufficient to show \eqref{eqn:Rj} and \eqref{eqn:Rj_lambda} for $j=1$. Using again \eqref{eqn:ineque_Blambda} we obtain \eqref{eqn:Rj} for $j=1$ from \eqref{eqn:lem:Chang+Wellner_sqrt2} and \eqref{eqn:lem:Wellner} setting $t=s-3$ since $c_n= nd_n/\log\log(n)=\log(n)^5/\log\log(n)\to\infty$ and $\sqrt{c_n}/r_n\to \infty$. 	Since 
	\begin{align*}
		R_{n,x,s}^{(1)}&\leq  1+  \frac{\mathbb{F}_n(x)}{x} +\Bigl( \frac{\mathbb{F}_n(x)}{x} \Bigr)^{s-2}+\Bigl( \frac{\mathbb{F}_n(x)}{x} \Bigr)^{s-3}
	\end{align*}
	we can conclude \eqref{eqn:Rj_lambda} for $j=1$ by applying \eqref{eqn:lem:Wellner} for $t=1,s-2,s-3$.\\
	In order to prove the remaining \eqref{eqn:K2} and  \eqref{eqn:K2/rn_to_0} we introduce the supremum statistic of the normalized uniform empirical process
	\begin{align}\label{eqn:def_Zn_process}
	 	 \mathbb{Z}_n(a,b) = \sup_{a< x < b} \sqrt{n} \frac{ |  \mathbb{F}_n(x)-x | }{\sqrt{x(1-x)}},\, a,b\in(0,1),
	\end{align}
	studied by \citet{Jaeschke1979}, see also Chap. 16 of \citet{ShorackWellner1986}. In particular, by (19), (20), (25), (26) and (g) in Sec. 16.1 from \citet{ShorackWellner1986}  and the symmetry $\mathbb{Z}_n(0,a)\overset{d}{=}\mathbb{Z}_n(1-a,1)$ we have	
	\begin{align}
		&\frac{\mathbb{Z}_n(d_n,1-d_n)}{b_n} \xrightarrow{P_0^n} 1 \text{ and }\,b_n \mathbb{Z}_n(d_n,1-d_n) -c_n \overset{\mathrm d}{\longrightarrow} Y \text{ under }P_0^n,	\label{eqn:jaeschke_results1}	\\
		&b_n^{-1}\mathbb{Z}_n(0,d_n)\xrightarrow{P_0^n}0\;\text{ and }b_n^{-1}\mathbb{Z}_n(1-d_n,1)\xrightarrow{P_0^n}0\label{eqn:jaeschke_results2}, \text{ where }\\
		&b_n=\sqrt{2\log\log(n)}\text{ and }c_n=b_n^2 + \frac{1}{2} \log\log\log(n) - \frac{1}{2} \log(4\pi). \label{eqn:def_bn_cn}
	\end{align}
	Moreover, observe that
	\begin{align}
		&n \Bigl( \sup_{  x \in(a, b) } K_2(\mathbb{F}_n(x),x) \Bigr)-r_n = \frac{1}{2}\mathbb{Z}_n(a,b)^2-r_n \label{eqn:Sn(2)...1} \\
		&=\frac{1}{2}(b_n\mathbb{Z}_n(a,b)-c_n)\Bigl( \frac{\mathbb{Z}_n(a,b)}{b_n}+\frac{c_n}{b_n^2} \Bigr) + \Bigl( \frac{1}{2}\frac{c_n^2}{b_n^2}-r_n \Bigr), \, a,b\in(0,1)\label{eqn:Sn(2)=....}.
	\end{align}
	Finally, \eqref{eqn:K2} follows  from \eqref{eqn:Sn(2)=....} and \eqref{eqn:jaeschke_results1}. We can conclude \eqref{eqn:K2/rn_to_0} from \eqref{eqn:jaeschke_results2} and \eqref{eqn:Sn(2)...1} since $b_n^2/r_n\to 2$. 
\end{proof}

\begin{proof}[Proof of \Cref{theo:full_power}]
	Similar to the previous proof  we can assume $P_0=\lebesgue_{|(0,1)}$. Set $l_n=\log\log(n)$. Due to symmetry it is sufficient to give the proof under the assumption 
	\begin{align}
		&n^{\frac{1}{2}} \varepsilon_{n} v_n^{-\frac{1}{2}}l_n^{-1}(\mu_n(0,v_n]-v_n) \to A \in\{-\infty,\infty\}	\label{eqn:fullpower_proof_ass1}.
	\end{align}
	Let $G_n$ be the distribution function of $Q_n$, i.e. $ G_n(v)=v+\varepsilon_{n}(\mu_n(0,v]-v)$ for all $v\in(0,1)$. 
	If $A=\infty$ then it is easy to see that 
	\begin{align}\label{eqn:fullpower_proof_nF(v_n)}
		l_n^{-1}nG_n(v_n) \to \infty.
	\end{align}
	Note that $A=-\infty$ implies $l_n^{-1}\sqrt{nv_n}\to\infty$ and so $l_n^{-1}nv_n(1-\varepsilon_{n})\to \infty$. Hence,  \eqref{eqn:fullpower_proof_nF(v_n)} holds in both cases for $A\in\{-\infty,\infty\}$.  Due to \eqref{eqn:fullpower_proof_nF(v_n)} we obtain from \Cref{lem:Chang+Wellner} that
	\begin{align}\label{eqn:fullpower_proof_Chang+Wellner}
		\frac{\mathbb{F}_n(v_n)}{G_n(v_n)} \to 1 \text{ in }Q_n^n\text{-probability}.
	\end{align}
	Observe that $Q_n^n(X_{1:n}<v_n<X_{n:n})\to 1$ and so 
	\begin{align*}
		Q_n^n\Bigl( nS_n(s) - r_n \geq l_n \Bigl( \frac{nv_n}{l_n} \Phi_s\Bigl( \frac{\mathbb{F}_n(v_n)}{v_n} \Bigr)  - \frac{r_n}{l_n}\Bigr) \Bigr)\to 1.
	\end{align*}
	Since $r_nl_n^{-1}\to 1$ for proving \eqref{eqn:full_power_result} it is sufficient to verify  that
	\begin{align}\label{eqn:fullpower_proof_suff}
		Q_n^n\Bigl( \,\frac{ nv_n}{l_n}\Phi_s\Bigl( \frac{\mathbb{F}_n(v_n)}{v_n} \Bigr) > 2 \,\Bigr) \to 1.
	\end{align}
	Since $v_n^{-1}G_n(v_n)\geq 1-\varepsilon_{n}\to 1$ we can assume without loss of generality that 
	\begin{align}\label{eqn:fullpower_proof_toC}
		v_n^{-1}G_n(v_n)\to C\in[1,\infty],
	\end{align}
	otherwise we use standard subsequence arguments.\\
	First, consider $C< \infty$. Since $\Phi_s(1)=\Phi_s(1) = 0$ and $\Phi''_s(x)=x^{s-2}$, $x>0$, we obtain from Taylor's Theorem that
	\begin{align}\label{eqn:fullpower_proof_Taylor_C<infty}
		v_n\Phi_s\Bigl( \frac{ \mathbb{F}_n(v_n)}{ v_n } \Bigr)= \frac{ ( \mathbb{F}_n(v_n)-v_n)^2 }{ 2 v_n } \Bigl( \frac{ \mathbb{F}_n^* }{v_n} \Bigr)^{s-2}, 
	\end{align}
	where $\mathbb{F}_n^*$ is a random variable fulfilling $\min\{v_n,\mathbb{F}_n(v_n)\}\leq \mathbb{F}_n^*\leq \max \{v_n,\mathbb{F}_n(v_n)\}$. We deduce from  \eqref{eqn:fullpower_proof_Chang+Wellner} that 	for all $0<\delta<\min\{1,C^{s-2}\}$
	\begin{align}\label{eqn:fullpower_proof_G}
		Q_n^n\Bigl( \Bigl( \frac{ \mathbb{F}_n^* }{v_n} \Bigr)^{s-2}>\delta \Bigr) \geq Q_n^n\Bigl( \min \Bigl\{ 1, \Bigl( \frac{\mathbb{F}_n(v_n) }{v_n} \Bigr)^{s-2} \Bigr\} > \delta \Bigr) \to 1.
	\end{align} 
	\citet{DitzhausJanssen2017} showed in the proof of their Theorem 4.1 that under \eqref{eqn:fullpower_proof_ass1}
	\begin{align}\label{eqn:fullpower_proof_Chebyschev}
		Q_n^n \Bigl( \sqrt{n} \frac{|\mathbb{F}_n(v_n)-v_n|}{\sqrt{v_n(1-v_n)l_n}} > \gamma \Bigr) \to 1 \text{ for all }\gamma>0.
	\end{align}
	The main idea of proving this is a simple application of Chebyschev's inequality. Combining \eqref{eqn:fullpower_proof_Taylor_C<infty} to \eqref{eqn:fullpower_proof_Chebyschev} yields \eqref{eqn:fullpower_proof_suff}. \\
	Now consider $C=\infty$. From \eqref{eqn:fullpower_proof_nF(v_n)}, \eqref{eqn:fullpower_proof_Chang+Wellner} and \eqref{eqn:fullpower_proof_toC} we obtain for all $\gamma>0$:
	\begin{align}\label{eqn:fullpower_proof_nempF} 
		 Q_n^n(  l_n^{-1}n\mathbb{F}_n(v_n) >\gamma )\to 1
		\text{ and } Q_n^n ( v_n^{-1} \mathbb{F}_n(v_n)  > \gamma ) \to 1.
	\end{align}
	Regarding the second statement of \eqref{eqn:fullpower_proof_nempF} we only need to analyse $\Phi_s(x)$ for sufficiently large $x$  more closely to prove \eqref{eqn:fullpower_proof_suff}. It is easy to verify that there exists some $c_{1,s},c_{2,s}>0$ and $c_{3,s}\in\R$ such that 
	\begin{align}\label{eqn:fullpower_proof_C=infty_Psi(x)=...}
		\Phi_s(x)\geq c_{2,s}x+c_{3,s}\text{ for all }x\geq c_{1,s}.
	\end{align}
	For this purpose consider the cases $s<0,s=0,s\in(0,1),s>1$ separately. E.g., if $s\in(0,1)$ then	$\Phi_s(x) \geq x(s-x^{s-1})/(s(1-s)) \geq x/(2(1-s))$  for all $x\geq (2^{-1}s)^{\frac{1}{s-1}}$. From \eqref{eqn:fullpower_proof_nempF} we get for all $\gamma_0>0$ 
	\begin{align*}
		&Q_n^n\Bigl( \frac{nv_n}{l_n}\Bigl( c_{2,s}\frac{ \mathbb{F}_n(v_n)}{v_n} + c_{3,s} \Bigr)> \gamma_0 \Bigr)\\
		&\geq Q_n^n\Bigl( \frac{n\mathbb{F}_n(v_n)}{l_n}> \frac{2\gamma_0}{c_{2,s}}\,,\, \frac{\mathbb{F}_n(v_n)}{v_n}> \frac{2c_{3,s}}{c_{2,s}} \Bigr) \to 1.
	\end{align*}
	Combining this, the second statement in \eqref{eqn:fullpower_proof_nempF} and \eqref{eqn:fullpower_proof_C=infty_Psi(x)=...} yields \eqref{eqn:fullpower_proof_suff}.
\end{proof}

\begin{proof}[Proof of \Cref{theo:undetect}]
	Similar to the two previous proofs we can assume that $P_0=\lebesgue_{|(0,1)}$. Let $d_n=n^{-1}(\log n)^5$. 
	Keep in mind throughout the whole proof that due to mutual contiguity every statement which holds in $P_0^n$-probability does so in $Q_n^n$-probability and vice versa. Let $\mathbb{Z}_n(a,b)$ be defined as in \eqref{eqn:def_Zn_process}. By Theorem 4.2 of \citet{DitzhausJanssen2017} we have $b_n \mathbb{Z}_n(0,1) -c_n \to Y$ in distribution under $Q_n^n$,  where $b_n$ and $c_n$ are defined by \eqref{eqn:def_bn_cn}. Consequently, all statements in \eqref{eqn:jaeschke_results1} and \eqref{eqn:jaeschke_results2} hold also under $Q_n^n$.  Combining these statements and \eqref{eqn:Sn(2)=....} we obtain that \eqref{eqn:limit_null} holds under the alternative in the case of $s=2$.	Since \eqref{eqn:Rj} and \eqref{eqn:Ks-rn_to_-infty} are also valid in $Q_n^n$-probability we deduce the statement for general $s\in\R$ from \eqref{eqn:Ks()...taylor}.
\end{proof}

\begin{proof}[Proof of \Cref{theo:cai_wu_ext}]
	The proof is divided into two parts. First, we discuss the case $\beta<\beta^*$ by applying our \Cref{theo:full_power}. Second, we discuss the case $\beta>\beta^*$ by applying Lemma 3.12 of \citet{DitzhausJanssen2017}. To improve the readability, set $l_n=\log\log(n)$.
	
	\begin{center}
		\underline{The case $\beta<\beta^*$}	
	\end{center}
	By \Cref{theo:full_power} and \Cref{rem:simp_Hn} it is sufficient to show  that
	\begin{align*}
		l_n^{-1}\widetilde H_n(v_n)&= n^{\frac{1}{2}-\beta}v_n^{-\frac{1}{2}}l_n^{-1}  \Bigl(  \int_0^{v_n} \frac{\mathrm{ d } \mu_{n}}{\,\mathrm{ d }P_{0}}(F_0^{-1}(x)) + \frac{\mathrm{ d } \mu_{n}}{\,\mathrm{ d }P_{0}}(F_0^{-1}(1-x)) \,\mathrm{ d } x \Bigr)
	\end{align*}
	converges to $\infty$ for some $\log(n)n^{-1}\leq v_n\leq(\log(n))^{-1}$. Using the parametrization $v_n=n^{-\tau_n}$ with $\tau_n\in[\widetilde\tau_n,1-\widetilde\tau_n]$ and $\widetilde\tau_n=l_n(\log(n))^{-1}$  we obtain from the substitution $n^{-t}=x$ that 
	\begin{align*}
		l_n^{-1}\widetilde H_n(v_n)\geq n^{\frac{1}{2}-\beta+\frac{1}{2}\tau_n} l_n^{-1}\log(n)\int_{\tau_n}^{\infty} \exp ( h_n(t) -t\log(n) 
		) \,\mathrm{ d }t.
	\end{align*}
	Fix $\delta\in(0,1)$ with $\delta^{-1}\in\N$ and $2\delta\leq\beta^*-\beta$. By \eqref{eqn:cor:cai_wu_ext_Leb_>_0} there exists some $\kappa\in(0,1/2)$ such that for every sufficiently large $n\in\N$ we have
	\begin{align}
		& \lebesgue(\; t\in (1,\infty)  :  ( \beta^*-\delta -1 + t )\log(n)\leq h_{n}(t) \;) \geq \kappa 	\label{eqn:HC_caiwu_def_B1} \\
		\textrm{ or }\;\, &\lebesgue\Bigl(  t\in (0,1) : \Bigl( \beta^*-\delta +\frac{t}{2}-\frac{1}{2} \Bigr)\log(n) \leq h_{n}(t) \Bigr) \geq \kappa. \label{eqn:HC_caiwu_def_B2n}
	\end{align} 
	If \eqref{eqn:HC_caiwu_def_B1} holds then we consider $\tau_n=1-\widetilde\tau_n$ and get 
	\begin{align*}
		l_n^{-1}\widetilde H_n(v_n)\geq \kappa \,\sqrt{\log(n)}l_n^{-1} n^\delta.
	\end{align*}
	Otherwise, if \eqref{eqn:HC_caiwu_def_B2n} holds and $n$ is sufficiently large then 
	\begin{align*}
		\lebesgue\Bigl(  t\in ( \delta( j_n-1)+\widetilde\tau_n,j_n\delta  ) :  \Bigl( \beta^*-\delta +\frac{t}{2}-\frac{1}{2} \Bigr)\log(n) \leq h_{n}(t) \Bigr) \geq \frac{\delta\kappa}{2}
	\end{align*}
	for some $j_n\in\{1,\ldots,\delta^{-1}\}$. In this case set $\tau_n=\delta( j_n-1)+\widetilde\tau_n$ and obtain 
	\begin{align*}
		l_n^{-1}\widetilde H_n(v_n)\geq \frac{\kappa\delta}{2l_n}\log(n) n^{\beta^*-\beta-\delta+\frac{1}{2}\tau_n-\frac{1}{2}j_n\delta}
		\geq\frac{\kappa\delta}{2l_n} (\log(n))^{\frac 32}n^{\frac{1}{2}\delta}.
	\end{align*}
	To sum up, we can conclude that $l_n^{-1}\widetilde H_n(v_n)\to\infty$.

	\begin{center}
		\underline{The case $\beta>\beta^*$}	
	\end{center}
	Fix $x>0$. By Lemma 3.12 of \citet{DitzhausJanssen2017} and the substitution $u=n^{-t}$ it is sufficient to show that 
	\begin{align}
		  &n^{1-\beta}\mu_n\Bigl( n^{-\beta} \frac{\mathrm{ d } \mu_{n}}{\,\mathrm{ d }P_{0}} > x \Bigr)\label{eqn:cai_wu_ext_cond_mu(...} \\
		   &=n^{1-\beta} \int_{\{ u\leq \frac{1}{2}:n^{-\beta} \frac{\mathrm{ d } \mu_{n}}{\,\mathrm{ d }P_{0}}\left( F_{0}^{-1}(u) \right) > x \} } \frac{\mathrm{ d } \mu_{n}}{\,\mathrm{ d }P_{0}}(F_0^{-1}(u)) \,\mathrm{ d }u  \nonumber  \\
		  &+ n^{1-\beta}\int_{\{ u\leq \frac{1}{2}:n^{-\beta} \frac{\mathrm{ d } \mu_{n}}{\,\mathrm{ d }P_{0}}\left( F_{0}^{-1}(1-u) \right) > x \} } \frac{\mathrm{ d } \mu_{n}}{\,\mathrm{ d }P_{0}}(F_0^{-1}(1-u)) \,\mathrm{ d }u\nonumber \\
		  &\leq 2 n^{1-\beta}\log(n) \int_{\{ t\geq \frac{\log(2)}{\log(n)}:n^{-\beta} \exp(h_n(t)) > x \} } \exp(h_n(t)-t\log(n)) \,\mathrm{ d }t \nonumber \\
		  \intertext{ converges to $0$ and }\;
		  &n^{1-2\beta} \int_{\{n^{-\beta} \frac{\mathrm{ d } \mu_{n}}{\,\mathrm{ d }P_{0}}\leq x\}} \Bigl( \frac{\mathrm{ d } \mu_{n}}{\,\mathrm{ d }P_{0}} \Bigr)^2\,\mathrm{ d }P_0\nonumber \\
		  &\leq 2 n^{1-2\beta}\log(n) \int_{\{ t\geq \frac{\log(2)}{\log(n)}:n^{-\beta} \exp(h_n(t)) \leq x \} } \exp(2h_n(t)-t\log(n)) \,\mathrm{ d }t \nonumber 
	\end{align}
	does so as well. To verify this, set
	\begin{align*}
		& I_{n,1}= \log(n)n^{1-2\beta}  \int_{ \frac{\log(2)}{\log(n)}}^1
		\exp\Bigl( \log(n) \Bigl( \frac{2 h_{n}(t)}{\log(n)} -  t \Bigr)  \Bigr)\,\mathrm{ d } t, \\ %\label{eqn:caiwu_defi_In1}\\
		&I_{n,2}= \log(n)n^{1-\beta} \int_{1}^{M}
		\exp\Bigl( \log(n) \Bigl( \frac{h_{n}(t)}{\log(n)} -  t \Bigr)  \Bigr)\,\mathrm{ d } t \\
		\textrm{and }& I_{n,3}= \log(n)n^{1-\beta} \int_{M}^\infty
		\exp\Bigl( \log(n) \Bigl( \frac{h_{n}(t)}{\log(n)} -  t \Bigr)  \Bigr)\,\mathrm{ d } t. %\label{eqn:caiwu_defi_In2}
	\end{align*}
	Observe that
	\begin{align*}
		&n^{1-\beta}\mu_n\Bigl( n^{-\beta} \frac{\mathrm{ d } \mu_{n}}{\,\mathrm{ d }P_{0}} > x \Bigr) \leq 2x^{-1}I_{n,1}+2(I_{n,2}+I_{n,3}) \\
		\text{and }&
		n^{1-2\beta} \int_{\{n^{-\beta} \frac{\mathrm{ d } \mu_{n}}{\,\mathrm{ d }P_{0}}\leq x\}} \Bigl( \frac{\mathrm{ d } \mu_{n}}{\,\mathrm{ d }P_{0}} \Bigr)^2\,\mathrm{ d }P_0 \leq 2I_{n,1}+2x(I_{n,2}+I_{n,3}).
	\end{align*}
	From \eqref{eqn:cor:cai_wu_ext_Leb_to_0} with $\delta=(\beta-\beta^*)/2>0$ we deduce for all $n\geq N_{1,\delta}$ that $I_{n,1}\leq \log(n)n^{-2\delta}\to 0$ and $I_{n,2}\leq \log(n)n^{-\delta}M\to 0$.  Consequently, it remains to show that $I_{n,3}\to 0$.\\
	First, assume that \eqref{eqn:cor:cai_wu_ext_techn_cond_lebes} holds for $\delta=(\beta-\beta^*)/2>0$. Then for all $n\geq N_{2,\delta}$
	\begin{align*}
		I_{n,3} \leq n^{-\delta} \log(n) \int_{1}^\infty 
		\exp( -\lambda_n t  )\,\mathrm{ d } t =\log(n)n^{-\delta}\lambda_n^{-1}\to 0.
	\end{align*}
	Second, suppose that \eqref{eqn:cor:cai_wu_ext_techn_cond_uniform} is fulfilled. Similar to the calculation in \eqref{eqn:cai_wu_ext_cond_mu(...}, we obtain $\log(n)\int \exp(h_{n,j}(t))n^{-t}\,\mathrm{ d }t=1$ and, thus, $\int \exp(h_n(t))n^{-t}\,\mathrm{ d } t\leq 2$. For all $\kappa>0$ there exists some $ N_{3,\kappa}\in\N$ such that for all $n\geq N_{3,\kappa}$ we have
	\begin{align}\label{eqn:caiwu_sup}
		\sup_{t\geq M}\Bigl|\gamma(t)-\frac{h_n(t)}{\log(n)}\Bigr|\leq \kappa \text{ and, hence, }
		\int_M^\infty n^{\gamma(t)-t} \,\mathrm{ d }t \leq  2n^\kappa .
	\end{align}
	From the latter and a simple proof by contradiction we can deduce that
	\begin{align*}
		\lebesgue(t\geq M:\gamma(t)-t>0)=0.
	\end{align*}
	We want to point out that \citet{Cai_Wu_2014} already showed the two previous statements for $M=1$ under the assumption \eqref{eqn:theo:Cai+Wu_con}. Let $\lfloor x\rfloor=\max\{k\in\N:k\leq x\}$ be the integer part of $x$ and $\tau_n=l_n/\log(n)$. To show $I_{n,3}\to 0$ we divide $h_n(t)$ as follows: $h_n(t)=(1-\tau_n)h_n(t)+\tau_nh_n(t)$. To get an upper bound, we use  \eqref{eqn:cor:cai_wu_ext_Leb_to_0} with $\delta=(\beta-\beta^*)/2$ for the first summand and the first statement in \eqref{eqn:caiwu_sup} with $\kappa=1$ for the second summand. Consequently, there is some $N_4\geq N_{1,\delta}+\exp(N_{3,1})$ such that for all $n\geq N_4$
	\begin{align*}
		I_{n,3} &\leq \log(n) n^{1-\beta+(1-\tau_n)(\beta^*+\delta-1)} \int_M^\infty n^{\tau_n(\gamma(t)+1-t)}\,\mathrm{ d }t\\
		&\leq \log(n)^{3-\beta^*-\delta} n^{-\delta} \int_M^\infty \lfloor \log(n)\rfloor^{\gamma(t)-t}\,\mathrm{ d }t
		\leq n^{-\frac{\delta}{2}} 2\lfloor \log(n)\rfloor\to 0. 
	\end{align*}
\end{proof}
%\section*{Acknowledgements}
%Write at the end!

\begin{proof}[Proof of \Cref{theo:cai_wu_expfam}]
	Without loss of generality we can assume that $P_0=\lebesgue_{|(0,1)}$ and $T(0)=0$. By assumption $T$ restricted on $[0,\eta]$ is invertible. Denote by $T^{-1}$ its inverse. We deduce from Theorem 1.5.12 of \citet{BinghamETAL1987} that for all $x\in[0,-T(\eta)]$ we have
	\begin{align*}
		\lebesgue_{|(0,1)}^{-T}(0,x)=T^{-1}(x)=x^pL_1\Bigl( \frac{1}{x} \Bigr)
	\end{align*}
	for a slowly varying function $L_1$ at infinity. Hence, from Theorems XIII.5.2 and XIII.5.3 of \citet{Feller1966} we obtain $C(n^{r})= n^{rp}L_2(n^r)$ for a slowly varying function $L_2$ at infinity. Moreover, it is well known, see Proposition 1.3.6 of \citet{BinghamETAL1987}, that $\log(L_2(x))=o(\log(x))$ and $L(x)x^{\kappa}\to\infty$ as $x\to\infty$. Let  $h_{n,1}$, $h_{n,2}$ and $h_n$ be defined as in \Cref{theo:Cai+Wu}. Fix $\delta\in(0,rp)$ and set $\lambda_n=\log\log(n)$. By the monotonicity of $T$
	we have 
	\begin{align*}
		h_n(t) \leq \log(C(\vartheta_n)) + \vartheta_n T(n^{-rp+\frac{\delta}{2}})= (rp+o(1)) \log(n)  - n^{\frac{\delta}{2p}}L(n^{rp})
	\end{align*}
	for all $t\leq rp-\delta/2$. Consequently, there is some constant $K>0$ such that
	\begin{align*}
		&\sup_{t\in(\log(2)/\log(n), rp-\frac{\delta}{2}) } \Bigl\{\frac{h_n(t)}{\log(n)} - \Bigl( 1-\frac{\lambda_n}{\log(n)}\Bigr)t+\frac{\min\{1,t\}}{2} \Bigr\}
		\leq K -n^{\frac{\delta}{4p}} \to -\infty.
	\end{align*}
	Furthermore, since $T\leq 0$ we have for sufficiently large $n$
	\begin{align*}
		&\sup_{t\geq rp-\frac{\delta}{2} } \Bigl\{\frac{h_n(t)}{\log(n)} - \Bigl( 1-\frac{\lambda_n}{\log(n)}\Bigr)t+\frac{\min\{1,t\}}{2} \Bigr\} \\
		&\leq rp + o(1) - \Bigl( rp-\frac{\delta}{2} \Bigr)  + \frac{\min\{rp-\delta/2,1\}}{2}<\beta^\#(r,p) - \frac{1}{2} + \delta.
	\end{align*}
	To sum up, \eqref{eqn:cor:cai_wu_ext_Leb_to_0} and \eqref{eqn:cor:cai_wu_ext_techn_cond_lebes}  hold for $\beta^*=\beta^\#(r,p)$. Similar to the previous calculations, 
	\begin{align*}
		\sup_{t\in(rp+\delta/4,rp+\delta/2) } \Bigl\{\frac{h_n(t)}{\log(n)} - t+\frac{\min\{1,t\}}{2} \Bigr\}> \beta^\#(r,p)-\frac{1}{2}-\delta
	\end{align*}
	for sufficiently large $n$	and so \eqref{eqn:cor:cai_wu_ext_Leb_>_0} holds for $\beta^*=\beta^\#(r,p)$.
\end{proof}

\begin{proof}[Proof of \Cref{theo:cai_wu_ext_norm}]
	Due to the analogy to the proof of \Cref{theo:cai_wu_ext} we skip some details here. Set $l_n=\log\log(n)$. By $\Phi$, $\Phi^{-1}$, $\phi$ we denote the distribution function, the left continuous quantile function and the density of $N(0,1)$, respectively.
	
	\begin{center}
		\underline{The case $\beta<\beta^*$}	
	\end{center}
	By $\Phi^{-1}(1-x)=-\Phi^{-1}(x)$, \Cref{theo:full_power} and \Cref{rem:simp_Hn} it remains to show for some $v_n=n^{-\tau_n}$ with $\tau_n\in[\widetilde\tau_n,1-\widetilde\tau_n]$ and $\widetilde\tau_n=l_n/\log(n)$ that
	\begin{align*}
	l_n^{-1}\widetilde H_n(v_n)= n^{\frac{1}{2}-\beta+\frac{\tau_n}{2}} \frac{1}{l_n\sqrt{2\pi}} \int \frac{\mathrm{ d } \mu_{n}}{\,\mathrm{ d }P_{0}}(x) \exp\Bigl( -\frac{x^2}{2} \Bigr)\mathbf{1}\Bigl\{ |x|\geq \Phi^{-1}(1-v_n) \Bigr\}\,\mathrm{ d }x 
	\end{align*}
	converges to $\infty$. A simple consequence of integration by parts is $\phi(x)(x/(1+x^2))\leq 1- \Phi(x) \leq \phi(x)/x$ for all $x>0$. By this it is easy to obtain $\Phi^{-1}(1-u)\leq \sqrt{-2\log(u)}$ for all sufficiently small $u>0$ and so $\Phi^{-1}(1-v_n)\leq \sqrt{2\tau_n\log(n)}$ for all sufficiently large $n\in\N$. Combining this and the substitution $x=y\sqrt{2\log(n)}$  yields 
	\begin{align*}%\label{eqn:cai_wu_norm_lnHn}
	l_n^{-1}\widetilde H_n(v_n)\geq   \int n^{\frac{1}{2}-\beta+\frac{1}{2}\tau_n+\frac{\widetilde h_n(y)}{\log(n)}-y^2} \mathbf{1}\Bigl\{|y|\geq \sqrt{\tau_n}\Bigr\}\,\mathrm{ d }y
	\end{align*}
	for sufficiently large $n\in\N$. 
	Fix $\delta\in(0,1)$ with $\delta^{-1}\in\N$ and $2\delta\leq \beta^*-\beta$. By \eqref{eqn:cor:cai_wu_norm_Leb_>_0} there exists some $\kappa\in(0,1/2)$ such that for every sufficiently large $n\in\N$ we have
	\begin{align}
		& \lebesgue\Bigl(\; |y|>1  :   \beta^*-\delta -1 + y^2 \leq \frac{\widetilde h_{n}(y)}{\log(n)} \;\Bigr) \geq \kappa 	\label{eqn:HC_caiwu_norm_def_B1} \\
	\textrm{ or } &	\lebesgue\Bigl(  |y|\in ( \sqrt{\delta( j_n-1)+\widetilde\tau_n},\sqrt{j_n\delta}  ) :  \beta^*-\delta +\frac{y^2-1}{2}  \leq \frac{\widetilde h_{n}(y)}{\log(n)} \Bigr) \geq \kappa \nonumber %\label{eqn:HC_caiwu_def__norm_B2n}
	\end{align} 
	for some appropriate $j_n\in\{1,\ldots,\delta^{-1}\}$. If \eqref{eqn:HC_caiwu_norm_def_B1} holds then set $\tau_n=1-\widetilde\tau_n$ and otherwise set $\tau_n=\delta(j_n-1)+\widetilde{\tau}_n$. Consequently, we obtain analogously to the proof of \Cref{theo:cai_wu_ext} that  $l_n^{-1}\widetilde H_n(v_n)\to \infty$.
	
	\begin{center}
		\underline{The case $\beta>\beta^*$}	
	\end{center}
	Set
	\begin{align*}
	& I_{n,1}= n^{1-2\beta}\sqrt{\frac{\log(n)}{\pi}}  \int_{-1}^1
	n^{\frac{2 \widetilde h_{n}(x)}{\log(n)} -  x^2}\,\mathrm{ d } x, \\ %\label{eqn:caiwu_defi_In1}\\
	&I_{n,2}= n^{1-\beta}\sqrt{\frac{\log(n)}{\pi}}  \int
	n^{\frac{ \widetilde h_{n}(x)}{\log(n)} -  x^2}\mathbf{1}\{|x|\in(1,M)\}\,\mathrm{ d } x\\
	\textrm{and }& I_{n,3}= n^{1-\beta}\sqrt{\frac{\log(n)}{\pi}}  \int
	n^{\frac{ \widetilde h_{n}(x)}{\log(n)} -  x^2}\mathbf{1}\{|x|\geq M\}\,\mathrm{ d } x. %\label{eqn:caiwu_defi_In2}
	\end{align*}
	Fix $y>0$. Set $P_0=N(0,1)$.  As done in the proof of \Cref{theo:cai_wu_ext}, it remains to verify that 
	\begin{align*}
	&n^{1-\beta}\mu_n\Bigl( n^{-\beta} \frac{\mathrm{ d } \mu_{n}}{\,\mathrm{ d }P_0} > y \Bigr) \leq \frac{1}{y} I_{n,1} + I_{n,2}+I_{n,3}\\
	\text{and }\; &n^{1-2\beta} \int \mathbf{1}\Bigl\{n^{-\beta} \frac{\mathrm{ d } \mu_{n}}{\,\mathrm{ d }P_0}\leq y\Bigr\} \Bigl( \frac{\mathrm{ d } \mu_{n}}{\,\mathrm{ d }P_0} \Bigr)^2\,\mathrm{ d }P_0 \leq  I_{n,1}+y(I_{n,2}+I_{n,3})
	\end{align*}
	converges to $0$, i.e. $I_{n,1},I_{n,2}$ and $I_{n,3}$ converge to $0$. From \eqref{eqn:cor:cai_wu_norm_Leb_to_0} with $\delta=(\beta-\beta^*)/2>0$ we deduce that $I_{n,1}\leq 2n^{-2\delta}\sqrt{\log(n)/\pi}\to 0$ and $I_{n,2}\leq 2M n^{-\delta}\sqrt{\log(n)/\pi}\to 0$. It remains to discuss $I_{n,3}$. First, assume that \eqref{eqn:cor:cai_wu_norm_techn_cond_lebes} holds for $\delta=(\beta-\beta^*)/2>0$. Then
	\begin{align*}
	I_{n,3} \leq n^{-\delta} \sqrt{\frac{\log(n)}{\pi}} \int \exp(-\lambda_nx^2)\,\mathrm{ d }x = n^{-\delta} \sqrt{\frac{\log(n)}{\lambda_n}} \to 0.
	\end{align*}
	Second, suppose that \eqref{eqn:cor:cai_wu_norm_techn_cond_uniform} is fulfilled. Analogously to the proof of \Cref{theo:cai_wu_ext}, there is some $\widetilde N_\kappa\in\N$ such that $\int \mathbf{1}\{|x|\geq M\} n^{\alpha(x)-x^2} \,\mathrm{ d }x\leq 2 n^{\kappa}$ for every $\kappa>0$ and all $n\geq \widetilde N_\kappa$. Moreover, $\lebesgue(|x|\geq M:\alpha(x)-x^2>0)=0$. Let $\tau_n=l_n/\log(n)$. Finally, from \eqref{eqn:cor:cai_wu_norm_Leb_to_0} with $\delta=(\beta-\beta^*)/2$ and \eqref{eqn:cor:cai_wu_norm_techn_cond_uniform} we get for sufficiently large $n$
	\begin{align*}
	I_{n,3} &\leq \sqrt{\frac{\log(n)}{\pi}} n^{1-\beta+(1-\tau_n)(\beta^*+\delta-1)} \int \mathbf{1}\{|x|\geq M\} n^{\tau_n(\alpha(x)+1-x^2)}\,\mathrm{ d }x\\
	&\leq n^{-\frac{\delta}{2}} \int \mathbf{1}\{|x|\geq M\} \lfloor \log(n)\rfloor^{\alpha(x)-x^2}\,\mathrm{ d }x
	\leq n^{-\frac{\delta}{2}} 2\lfloor \log(n)\rfloor\to 0. 
	\end{align*}
\end{proof}

\begin{proof}[Proof of \Cref{theo:den_exp}]
	We split the proof into two steps:
	\begin{center}
		\underline{1. Likelihood ratio test sequences:}
	\end{center}
	Let $r\geq \rho^*(\beta)$. Here, we give proof for \eqref{enu:theo:den_exp_r=} regarding to the likelihood ratio tests as well as  \eqref{enu:theo:den_exp_r>}. First, we introduce the variational distance $||P-Q||$ between to probability measures $P$ and $Q$ on the same measure space:
	\begin{align*}
		||P-Q||= \frac{1}{2}\int \Bigl | \frac{\mathrm{ d }P}{\mathrm{ d }(P+Q)}-\frac{\mathrm{ d }Q}{\mathrm{ d }(P+Q)} \Bigr | \,\mathrm{ d }(P+Q).
	\end{align*}
	By Lemmas 2.2 and 2.3 of \citet{Strasser1985} we have for every fixed $n\in\N$ that the sharp lower bound of the sum of error probabilities for all tests testing $P_0^n$ versus $Q_n^n$ is equal to $1-||P_0^n-Q_n^n||$. Moreover, this bound is attained by the likelihood ratio test $\varphi_n=\mathbf{1}\{ \mathrm d Q_n^n/\mathrm dP_0^n\geq 1\}$. It is well known and easy to show that weak convergence of binary experiments $\{P_0^n,Q_n^n\}\xrightarrow{w}\{P,Q\}$ implies convergence of the variational distance $||P_0^n-Q_n^n||\to ||P-Q||$. For more details about the convergence of binary or more general experiments we refer the reader to the book of \citet{Strasser1985}. To sum up, it is sufficient to show that $\{P_0^n,Q_n^n\}$ converges weakly to the uninformative experiment $\{\epsilon_0,\epsilon_0\}$ (if $r > \rho^{*}(\beta)$) and to the normal shift experiment  $\{N(-\sigma^2/2,\sigma^2),N(\sigma^2/2,\sigma^2)\}$ with $\sigma^2=\text{Var}_{P_{(0)}}(T)$ (if $r= \rho^{*}(\beta)$), respectively. For all $x>0$ define
	\begin{align*}
		&I_{n,1,x}=n^{1-\beta}P_{(\vartheta_n)}\Bigl( \varepsilon_{n}\frac{\mathrm{ d } P_{(\vartheta_n)}}{\,\mathrm{ d }P_{(0)}} > x  \Bigr),\\
		&I_{n,2,x}=n^{1-2\beta} \Bigl( \frac{C(\vartheta_n)^2}{C(2\vartheta_n)}P_{(2\vartheta_n)} \Bigl( \varepsilon_{n} \frac{\mathrm{ d } P_{(\vartheta_n)}}{\,\mathrm{ d }P_{(0)}}\leq x  \Bigr)-1 \Bigr).
	\end{align*}
	By Theorem 3.10 and Lemma 3.12 of \citet{DitzhausJanssen2017} it remains to show that 
	\begin{align}\label{eqn:den_exp_suff_I}
		&I_{n,1,x}\to 0 \;\text{ and }\; I_{n,2,x}\to \sigma^2 \mathbf{1}\{r=\rho^*(\beta)\}\text{ with }\sigma^2=\text{Var}_{P_{(0)}}(T)
	\end{align}
	for all $x>0$. For this purpose we introduce the Laplace transform $\omega$ defined
	\begin{align}\label{eqn:Laplace_transf}
		\omega({\vartheta})=C({\vartheta})^{-1}=\int \exp \bigl( {\vartheta}{T}\bigr)\,\mathrm{ d }P_{(0)},\;\vartheta\in\Theta.
	\end{align}
	By Corollary 7.1 of \citet{Barndorff-Nielsen} the Laplace transform $\omega$ is analytic in a neighborhood around ${0}$ and the derivatives can be determined by differentiation under the integral sign. Hence, there is $M\in(1,\infty)$ such that for all $x>0$ and $n\geq N_x$ we have  
	\begin{align*}
		& C(2{\vartheta}_n)\leq 2, \,\,\,\frac{\varepsilon_{n}C({\vartheta}_n)}{x} \leq  e^{-1} 
		\text{ and }
		\omega^{(4)} (2\vartheta_n)= \int T^4 \exp( 2\vartheta_n T) \,\mathrm{ d }P_{(0)}<M,
	\end{align*}
	where $f^{(k)}$ denotes the derivative of order $k\in\N$ of the function $f$. 	From this we obtain for all $x>0$ and $n\geq N_x$ that
	\begin{align*}
		 P_{(2\vartheta_{n})}\left(  \varepsilon_{n} \frac{\mathrm{ d } P_{(\vartheta_n)}}{\,\mathrm{ d }P_{(0)}}> x \right) &\leq 2 \int \mathbf{1}\Bigl\{ \vartheta_n T > \log \Bigl( \frac{x}{\varepsilon_{n}C(\vartheta_n)}\Bigr) \Big\} \exp ( 2\vartheta_n T ) \mathrm d P_{(0)}\\
		 &\leq  2\int \, (\vartheta_nT)^{4}\, \exp\left( 2\vartheta_n T \right)\,\mathrm{ d } P_{(0)} = o\bigl( n^{-2r}\bigr).
	\end{align*}
	By Taylor expansion around $0$ we get as $\vartheta\to 0$
	\begin{alignat*}{2}
		&\omega(2\vartheta)&&= 1 + 2 \vartheta E_{P_{(0)}}(T)  + 2 \vartheta^2 E_{P_{(0)}}(T^2)  + o\bigl( \vartheta^2 \bigr)  \textrm{ and so}\\
		&\omega(\vartheta)^2 
		%&&=  1 + 2 \sum_{j=1}^m\Bigl( \vartheta_j E_{P_{0}}(T_j) \Bigr)+2 \sum_{i,j=1}^m \Bigl[  E_{P_{0}}(T_j)E_{P_{0}}(T_i)+E_{P_{0}}(T_jT_i)\Bigr]\vartheta_i\vartheta_j + o\bigl( ||\vartheta||_2^2 \bigr)  \\
		&&= \omega(2\vartheta)-\vartheta^2\sigma^2+ o\bigl( \vartheta^2 \bigr).
	\end{alignat*}
	Consequently, for all $x>0$ 
	\begin{align*}%\label{eqn:exp_dense_ZGWS_B3}
		I_{n,2,x} =n^{1-2\beta}\Bigl( \frac{\omega(2\vartheta_n)}{\omega(\vartheta_n)^2} (1+o(\vartheta_n^2))-1 \Bigr)	= n^{1-2\beta-2r} \sigma^2(1+ o( 1 ) ),
	\end{align*}
	which proves the statement about $I_{n,2,x}$ in \eqref{eqn:den_exp_suff_I}. Furthermore, the one about $I_{n,1,x}$ is fulfilled since for all $x>0$
	\begin{align*}%\label{eqn:exp_dense_ZGWS_B2}
		I_{n,1,x}&\leq \frac1x n^{1-2\beta} \frac{C( \vartheta_{n} )^2}{C( 2\vartheta_{n} )} P_{(2\vartheta_n)}\Bigl( \varepsilon_{n} \frac{\mathrm{ d } P_{(\vartheta_n)}}{\,\mathrm{ d }P_{(0)}}> x \Bigr)	= o\bigl(n^{1-2\beta-2r} \bigr).
	\end{align*}
	\begin{center}
		\underline{2. Test sequences based on $S_n(s)$:}	
	\end{center}
	Here, we give proof for \eqref{enu:theo:den_exp_r=} corresponding to the results about $S_n(s)$ and \eqref{enu:theo:den_exp_r<}. Therefore, we apply \Cref{theo:full_power,theo:undetect}. This requires a closer analysis of $H_n(v)$ given by \eqref{eqn:def_H}. Without loss of generality we can assume $P_{(0)}=\lebesgue_{|(0,1)}$. Observe that for all $v\in(0,1/2)$ 
	\begin{align*}
		&\mu_n(0,v)-v = \int_{B_{1,v}} \chi_x(\vartheta_n) \,\mathrm{ d }x \text{ and }\mu_n(1-v,1)-v = \int_{B_{2,v}} \chi_x(\vartheta_n) \,\mathrm{ d }x,\\
		&\text{where }\chi_x(\vartheta)=C(\vartheta)\exp(\vartheta T)-1, \;B_{1,v}=(0,v)\text{ and }B_{2,v}=(1-v,1).
	\end{align*}
	As already stated, the Laplace transform $\omega$ introduced in \eqref{eqn:Laplace_transf} is analytic in $(-\delta,\delta)$ for sufficiently small $\delta\in(0,1)$ and so is $C$. For all $\vartheta\in(-\delta,\delta)$ and every $x\in(0,1)$ we have
	\begin{align*}
		\chi_x^{(1)}(\vartheta)&=\; C^{(1)}(\vartheta)\exp(\vartheta T(x)) + C(\vartheta) T(x) \exp(\vartheta T(x))  \\
		\textrm{and }\;\chi_x^{(2)}(\vartheta)&=\; \left[ C^{(2)}(\vartheta) + 2C^{(1)}(\vartheta) T(x) + C(\vartheta)T^2(x) \right]\exp(\vartheta T(x)).
		% \label{eqn:HC_expfam_dense_chix_2abl}, 		
	\end{align*}
	Since $C^{(1)}(0)=-\omega^{(1)}(0)=-\int_0^1T\,\mathrm{ d }\lebesgue$ we get from a Taylor expansion around $0$ that for all sufficiently large $n$, such that $\vartheta_n<\delta$, we have
	\begin{align*}
		\chi_x(\vartheta_n) = \vartheta_n \Bigl(-\int_0^1 T \,\mathrm{ d }\lebesgue + T(x) \Bigr) + \frac{\vartheta_n^2}{2}\chi_x^{(2)}(r_n(x)) \text{ with }r_n(x)\in[0,\vartheta_n].
	\end{align*}
	Since $C$, $\omega$ are analytic there exists $M>1$ such that for all $\vartheta\in[-\delta/2,\delta/2]$
	\begin{align*}
		|C(\vartheta)|+|C^{(1)}(\vartheta)|+|C^{(2)}(\vartheta)| +\sum_{k=0}^{2}\int_0^1 |T|^{k} \exp(\vartheta T)\,\mathrm{ d }\lebesgue  \leq M.
	\end{align*}
	By H{\"o}lder's inequality it holds for all $f:(0,1)\to\R$ with $\int_0^1 f^4\,\mathrm{ d }\lebesgue\leq M$ that
	\begin{align*}
		\int_{B_{j,v}} |f| \,\mathrm{ d }\lebesgue \leq v^{\frac{3}{4}} M^{\frac{1}{4}}\leq v^{\frac{3}{4}} M\text{ for }j=1,2.
	\end{align*}
	Hence, for all $j\in\{1,2\}$, $v\in(0,1/2)$ and large $n$, such that $|\vartheta_n|\leq \delta/2$, we get
	\begin{align*}
		\int_{B_{j,v}} | \chi_x^{(2)}(r_n(x))| \,\mathrm{ d }x 
		&\leq M  \int_{B_{j,v}}  \Bigl( 1+2|T|+T^2 \Bigr) \Bigl( \exp(\vartheta_nT)+\exp(-\vartheta_nT) \Bigr) \,\mathrm{ d } \lebesgue\Bigr) \\
		&\leq 8v^{\frac{3}{4}}M^2. 
	\end{align*}
	Consequently,
	\begin{align}\label{eqn:exp_dens_Hn_geq}
		H_n(v) &\geq  n^{\frac{1}{2}-\beta} v^{-\frac{1}{2}}\Bigl | \int_{B_{j,v}} \chi_x(\vartheta_n) \,\mathrm{ d }x \Bigr|\nonumber \\
		&= n^{\frac{1}{2}-\beta-r} v^{-\frac{1}{2}}\Bigl | \int_{B_{j,v}} \Bigl( T-\int_0^1 T\,\mathrm{ d }\lebesgue \Bigr)\,\mathrm{ d }\lebesgue + o(1)  \Bigr|.
	\end{align}
	If $\int_{0}^v ( T-\int_0^1 T\,\mathrm{ d }\lebesgue )\,\mathrm{ d }\lebesgue=0=\int_{1-v}^1 ( T-\int_0^1 T\,\mathrm{ d }\lebesgue )\,\mathrm{ d }\lebesgue$ would hold for every $v\in(0,1/2)$ then $T\equiv \int_0^1 T\,\mathrm{ d }\lebesgue$ would be true $\lebesgue_{|(0,1)}$-almost surely, which contradicts the assumption $\text{Var}_{P_{(0)}}(T)>0$. Consequently, we can deduce from \eqref{eqn:exp_dens_Hn_geq} that $(\log\log(n))^{-1} H_n(v^*)\to \infty$ for some $v^*\in(0,1/2)$ if $r<\rho^*(\beta)$. Applying \Cref{theo:full_power} with $v_n=v^*$ gives us  \eqref{enu:theo:den_exp_r<}. \\
	It remains to discuss the case $r=\rho^*(\beta)$. Set $c_{n,1}=1/n$, $c_{n,2}=c_{n,3}=1/\sqrt{n}$ and $c_{n,4}=(\log(n))^{-4}$. Clearly, \eqref{eqn:undetect_constants}  holds with $\kappa=1/2$. Moreover, we can deduce from our previous considerations that for all sufficiently large $n\in\N$, such that $\vartheta_n\leq \delta/2$, we have
	\begin{align*}
		\sup_{v \in [c_{n,1},c_{n,4}]}H_n(v)
		&\leq   2n^{\frac{1}{2}-\beta-r}  \sup_{v \in [c_{n,1},c_{n,4}]} v^{-\frac{1}{2}} \Bigl ( vM + v^{\frac{3}{4}}M +  8\frac{\vartheta_n}{2}v^{\frac{3}{4}}M^2\Bigr)\\
		& \leq 2\sup_{v \in [c_{n,1},c_{n,4}]} v^{\frac{1}{4}} 10 M^2 = 20M^2  (\log(n))^{-1}.
	\end{align*}
	Hence, \eqref{eqn:undetect_Hn} is fulfilled. Recall that by the first step $\{P_0^n,Q_n^n\}$ converges weakly to $\{N(-\sigma^2/2,\sigma^2),N(\sigma^2/2,\sigma^2)\}$ for some $\sigma^2>0$ and, hence, by the first Lemma of Le Cam $P_0^n$ and $Q_n^n$ are mutually contiguous. Finally, applying \Cref{theo:undetect} yields the corresponding statement in \eqref{enu:theo:den_exp_r=}.
\end{proof}

%\section*{Supplement to "TITLE of the paper"}
%(DOI: completed by the typesetter; .pdf). Due to space constraints we have relegated all proofs to the supplement.

\end{document}